\newcommand{\confrac}[2]{%
  \frac{\displaystyle{%
    \strut\hfill{#1}\hfill\;\vrule}}%
      {\displaystyle{%
       \strut\vrule\;\hfill{#2}\hfill}}}%
    \newcommand\contFrac{\@ifstar{\@contFracStar}{\@contFracNoStar}}
   \def\singleContFrac#1#2{%
        \begin{array}{@{}c@{}}%
            \multicolumn{1}{c|}{#1}%
            \\%
            \hline%
           \multicolumn{1}{|c}{#2}%
        \end{array}%
   }
    \def\@contFracNoStar#1{%
        \mathchoice{
            \@contFracNoStarDisplay@#1//\@nil%
        }{
            \@contFracNoStarInline@#1//\@nil%
        }{
            \@contFracNoStarInline@#1//\@nil%
        }{
            \@contFracNoStarInline@#1//\@nil%
        }%
    }
    \def\@contFracNoStarDisplay@#1//#2\@nil{%
        \@ifmtarg{#2}{%
            #1%
        }{%
            #1+\cfrac{1}{\@contFracNoStarDisplay@#2\@nil}%
        }%
    }
        \def\@contFracNoStarInline@#1//#2\@nil{%
            \@ifmtarg{#2}{%
                #1%
            }{%
                #1 \@@contFracNoStarInline@@#2\@nil%
            }%
        }
        \def\@@contFracNoStarInline@@#1//#2\@nil{%
            \@ifmtarg{#2}{%
                + \singleContFrac{1}{#1}%
            }{%
                + \singleContFrac{1}{#1} \@@contFracNoStarInline@@#2\@nil%
            }%
        }
    \def\@contFracStar#1{%
        \mathchoice{
            \@contFracStarDisplay@#1////\@nil%
        }{
            \@contFracStarInline@#1//\@nil%
        }{
            \@contFracStarInline@#1//\@nil%
        }{
            \@contFracStarInline@#1//\@nil%
        }%
    }
    \def\@contFracStarDisplay@#1//#2//#3\@nil{%
        \@ifmtarg{#2}{%
            #1%
        }{%
            #1 + \cfrac{#2}{\@contFracStarDisplay@#3\@nil}%
        }%
    }
        \def\@contFracStarInline@#1//#2\@nil{%
            \@ifmtarg{#2}{%
                #1%
            }{%
                #1 \@@contFracStarInline@@#2\@nil%
            }%
        }
        \def\@@contFracStarInline@@#1//#2//#3\@nil{%
            \@ifmtarg{#3}{%
                + \singleContFrac{#1}{#2}%
            }{%
                + \singleContFrac{#1}{#2} \@@contFracStarInline@@#3\@nil%
            }%
        }
       \numberwithin{equation}{section}
\theoremstyle{plain}
\newtheorem{thm}{Theorem}[section]
\newtheorem{lem}[thm]{Lemma}
\newtheorem{cor}[thm]{Corollary}
\newtheorem{prop}[thm]{Proposition}
\theoremstyle{definition}
\newtheorem{rem}[thm]{Remark}
\title[Hausdorff dimension of intersections ]
{ Hausdorff dimension of intersections between\\ the Jarn\'ik sets and Diophantine fractals
}
\author{Hiroki Takahasi}
\address{Keio Institute of Pure and Applied Sciences (KiPAS),  Department of Mathematics, Keio University, Yokohama,223-8522, JAPAN}  \email{hiroki@math.keio.ac.jp}
\subjclass[2020]{11A55, 11K50, 28A80}
\thanks{{\it Keywords}: 
continued fraction; Hausdorff dimension; Iterated Function System}
\begin{document}

\begin{abstract} 
The irrationality exponent of a real number measures how well that number can be approximated by rationals.
Real numbers with irrationality exponent strictly greater than $2$ are transcendental numbers, and 
form a set with rich fractal structure.
We show that this set intersects 
the limit set of any parabolic iterated function system arising from the backward continued fraction in a set of full Hausdorff dimension. As a corollary, we show that the set of irrationals whose irrationality exponents are strictly bigger than $2$ and whose backward continued fraction expansions have bounded partial quotients is of Hausdorff dimension $1$.
This is a sharp contrast to the fact that there exists no irrational whose irrationality exponent is strictly greater than $2$ and whose regular continued fraction expansion has bounded partial quotients.

\end{abstract}

\maketitle



\section{Introduction}\label{intro}
 The {\it irrationality exponent}
 $\mu(x)$ of a real number $x$ measures
 how well $x$ can be approximated by rational numbers. It is the supremum of the set of $\mu\in\mathbb R$ such that 
 the inequality
 \[\left|x-\frac{p}{q}\right|<\frac{1}{q^\mu}\]
 holds for 
 an infinite number of 
 $(p,q)\in\mathbb Z\times\mathbb N$ 
 such that $|p|$ and $q$ are coprime. 
 Rational numbers have irrationality exponent equal to $1$. Irrational numbers have irrationality exponent bigger than or equal to $2$. 
 From the work of Khintchine \cite{Khi24},  
 almost all irrational numbers in the sense of the Lebesgue measure
 have irrationality exponent equal to $2$ (see \cite[Section~1]{Sch71}). 
 By the Thue-Siegel-Roth theorem \cite{R55}, real numbers with irrationality exponent strictly greater than $2$ are transcendental numbers. 

  The set of numbers with irrationality exponent strictly greater than $2$ is of Lebesgue measure $0$, yet has rich fractal structure.
  For each $\alpha>2$, the set 
\[G(\alpha)=\{x\in\mathbb R\setminus\mathbb Q\colon\mu(x)\geq\alpha\}\]
is called {\it the Jarn\'ik set}.
 Jarn\'ik \cite{Jar29} and Besicovitch \cite{Bes34} independently showed that  
 $G(\alpha)$ is of Hausdorff dimension $2/\alpha$
 for any $\alpha>2$.
G\"uting \cite{Gut63} showed that the set $\{x\in\mathbb R\setminus\mathbb Q\colon\mu(x)=\alpha\}$ is of Hausdorff dimension $2/\alpha$ for any $\alpha>2$, thereby strengthening the result of Jarn\'ik and Besicovitch.
Beresnevich et al. \cite{BDV01} showed that the Hausdorff $2/\alpha$-measure of this set is infinite.
Bugeaud \cite{Bug03}  extended these results
to general approximation order functions in the sense of Khintchine \cite{Khi24}.
Hill and Velani \cite{HV98} proved an analogue of the Jarn\'ik-Besicovitch theorem for geometrically finite Kleinian groups. 
Falconer \cite{Fal94} 
showed that
$G(\alpha)$
has the large intersection property for any $\alpha\in(2,\infty)$.

We further investigate the structure of the Jar\'ik sets by analyzing
their intersections with other fractal sets.
Bugeaud \cite[Theorem~2]{Bug08} showed that 
for any $\alpha\geq2$, the middle third Cantor set in $[0,1]$ contains an irrational with irrationality exponent $\alpha$, answering the question of
Mahler \cite{Mah84}.
Becher et al. \cite[Theorem~1]{BRS} showed that for any $\alpha>2$ and any $b\in[0,2/\alpha]$, there exists a Cantor-like set such that with respect to the uniform probability measure on it, almost every number 
has irrationality exponent equal to $\alpha$.
In this paper we consider intersections of the Jarn\'ik sets and fractal sets arising from continued fractions.

Each irrational $x\in\mathbb R$
has the {\it regular}  continued fraction (RCF) expansion
\begin{equation}\label{RCF}\begin{split}x&=a_0(x)+\confrac{1 }{a_{1}(x)} + \confrac{1 }{a_{2}(x)}+ \confrac{1 }{a_{3}(x)}  +\cdots
\end{split}
\end{equation}
where $a_0(x)=\lfloor x\rfloor$ and 
$a_n(x)\geq1$, $n\geq1$ are integers called {\it partial quotients}. 
Each irrational $x\in\mathbb R$ also has the {\it backward} (aka minus or negative) continued fraction (BCF) expansion
\begin{equation}\label{BCF}\begin{split}x&=b_0(x)-\confrac{1 }{b_{1}(x)} - \confrac{1 }{b_{2}(x)} -\confrac{1 }{b_{3}(x)} -\cdots
,\end{split}\end{equation}
where $b_0(x)=\lfloor x\rfloor+1$ and 
$b_n(x)\geq2$, $n\geq1$ are integers.
Sets of irrationals defined by restrictions  
on their partial quotients become fractal sets.
For a finite set $\mathcal A\subset \mathbb N$ with $\#\mathcal A\geq2$, define
\[E_\mathcal A=\{x\in(0,1)\setminus\mathbb Q\colon a_n(x)\in \mathcal A\text{ for all }n\geq1\}.\]
Similarly, for a finite set $\mathcal B\subset \mathbb N_{\geq2}$
with $\#\mathcal B\geq2$
define
\[F_\mathcal B=\{x\in(0,1)\setminus\mathbb Q\colon b_n(x)\in \mathcal B\text{ for all }n\geq1\}.\]

Recall that $x\in\mathbb R\setminus\mathbb Q$ is {\it badly approximable} if there is a constant $C>0$ such that for any positive integer pair $(p,q)$,   \[\left|x-\frac{p}{q}\right|>\frac{C}{q^2}.\]  
Irrationals with bounded RCF partial quotients are precisely badly approximable numbers.
Since the irrationality exponent of a badly approximable number is $2$, we get
\begin{equation}\label{RCF}G(\alpha)\cap\{x\in\mathbb R\colon (a_n(x))_{n=1}^\infty\text{ is bounded}  \}=\emptyset\ \text{
for any }\alpha>2.\end{equation}
In particular,
for any finite set $\mathcal A\subset \mathbb N$ with $\#\mathcal A\geq 2$, $G(\alpha)\cap E_\mathcal A=\emptyset$ holds for any $\alpha>2$.

\if0Define
\[\mu^-(x)=2+
\limsup_{n\to\infty}\frac{\log b_{n+1}(x)}{\log q^-_n(x)}\in[2,\infty].\]
\begin{thm}\label{thmX}
We have \[\dim_{\rm H}\left\{x\in(0,1)\setminus\mathbb Q\colon\mu(x)= 
\mu^-(x)\right\}=1.\]\end{thm}
\begin{proof}
By
\cite[Theorem~1.1]{DuvShi24}, 
if the number of consecutive $2$ in the BCF of $x$ is bounded then $\mu(x)=\mu^-(x)$.
Hence, it suffices to show that the set
\[A=\{x\in(0,1)\setminus \mathbb Q\colon \#\{n\geq1\colon \exists k\in\mathbb N \text{ s.t. } b_i(x)=2\text{ for  }i=k,\ldots,n+k-1\}<\infty\}\]
is of Hausdorff dimension $1$.
 Use saturation. \cite{T25}
\end{proof}
\fi

To move on to the BCF fractals, 
a key ingredient is the well-known transformation formulas between the RCF and BCF partial quotients (see Section~\ref{trans-sec}).
For any finite set
$\mathcal B\subset \mathbb N_{\geq2}$ with $\#\mathcal B\geq2$ and $2\notin \mathcal B$, the transformation formula from the BCF to RCF shows that any irrational in $F_\mathcal B$ is badly approximable. Hence, $G(\alpha)\cap F_\mathcal B=\emptyset$ holds for any $\alpha>2$.

The appearance of $2$ in the BCF partial quotients is necessary to obtain non-empty intersection.
From the result of Duverney and Shiokawa \cite[Theorem~3.3]{DuvShi24}, $G(\alpha)\cap  F_{\{2,3\}}$ is
non-empty for any $\alpha>2$. 
A close inspection into their proof shows that $\{2,3\}$ can be generalized to  
any finite set $\mathcal B\subset \mathbb N_{\geq2}$ with $\#\mathcal B\geq2$ and $2\in \mathcal B$,
 and that $G(\alpha)\cap  F_{\mathcal B}$ is uncountable for any $\alpha>2$. Our main result 
 is concerned with the Hausdorff dimension of these intersections. 
 Let $\dim_{\rm H}$ denote the Hausdorff dimension on $\mathbb R$.
\begin{thm}\label{thma}
For any finite set $\mathcal B\subset\mathbb N_{\geq2}$ with $\#\mathcal B\geq2$ and 
 $2\in \mathcal B$, we have
\[\lim_{\alpha\to2+0}\dim_{\rm H}(G(\alpha)\cap F_\mathcal B  )=\dim_{\rm H}F_\mathcal B.\]
\end{thm}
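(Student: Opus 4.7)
Only the lower bound in the theorem is non-trivial, the upper bound being immediate from monotonicity of Hausdorff dimension in $\alpha$. Write $s := \dim_H F_\mathcal{B}$. The plan is, given $\varepsilon > 0$, to produce some $\alpha = \alpha(\varepsilon) > 2$ together with a Cantor subset $X \subseteq G(\alpha) \cap F_\mathcal{B}$ with $\dim_H X \geq s - \varepsilon$.

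Since $2 \in \mathcal{B}$, the IFS $\{\phi_b(x) = 1/(b - x) : b \in \mathcal{B}\}$ generating $F_\mathcal{B}$ is parabolic (the branch $\phi_2$ fixes $1$ with derivative $1$). For $L \in \mathbb{N}$, let $F_\mathcal{B}^{(L)} \subseteq F_\mathcal{B}$ denote the set of $x$ whose BCF has no $2$-run of length exceeding $L$; this is the limit set of a uniformly hyperbolic sub-IFS (obtained by inducing past the parabolic branch up to length $L$), and a standard Bowen pressure argument yields $s_L := \dim_H F_\mathcal{B}^{(L)} \nearrow s$ as $L \to \infty$. Fix $L$ with $s_L > s - \varepsilon/2$, and write $\chi = \chi(\mathcal{B}, L) > 0$ for the associated Lyapunov exponent, so that RCF denominators of $F_\mathcal{B}^{(L)}$-points satisfy $q_m \asymp e^{\chi m}$.

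The BCF-to-RCF transformation formulas (Section~\ref{trans-sec}) imply that a maximal BCF $2$-run of length $\ell$ corresponds to a single RCF partial quotient of size $\asymp \ell$, while the iterated parabolic derivative $(\phi_2^\ell)'$ contracts the BCF cylinder by a factor $\asymp \ell^{-2}$. I would construct $X$ inductively via a Moran-type procedure: at stage $k$, each surviving BCF cylinder is extended first by $M_k$ ``free'' partial quotients drawn from $F_\mathcal{B}^{(L)}$-admissible words, then by a forced $2$-run of length $\ell_k$ satisfying $\log \ell_k = (\alpha - 2) \chi M_k$ and $M_k \geq (\alpha - 1) \sum_{j < k} M_j$, so that the accumulated RCF denominator growth forces $\ell_k \geq q_{m_k}^{\alpha - 2}$ and hence $\mu(x) \geq \alpha$ for every $x \in X$. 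A Moran / mass-distribution computation then yields
\[
\dim_H X \;=\; \frac{s_L}{1 + 2(\alpha - 2)},
\]
which exceeds $s - \varepsilon$ once $\alpha$ is chosen sufficiently close to $2$; as $\varepsilon > 0$ was arbitrary, this establishes the claim.

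The main obstacle is the rigorous execution of the Moran / mass-distribution step in this mixed hyperbolic--parabolic construction. One must (a) obtain two-sided BCF cylinder diameter estimates that cleanly separate the exponential contraction from the $F_\mathcal{B}^{(L)}$ positions from the polynomial contraction $\asymp \ell_k^{-2}$ of the parabolic $\phi_2^{\ell_k}$-blocks, requiring bounded distortion for the hyperbolic portion and the standard parabolic asymptotic for $\phi_2^\ell$ near its neutral fixed point; and (b) endow $X$ with a Bernoulli-type measure built from an $s_L$-equilibrium state on the free positions, and verify via the mass-distribution principle that its local dimension is $s_L/(1 + 2(\alpha - 2))$ almost everywhere. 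These are standard but careful ingredients in parabolic IFS dimension theory.
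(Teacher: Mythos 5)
Your architecture is the same as the paper's: pass to a uniformly hyperbolic subsystem of $F_{\mathcal B}$ of dimension close to $\dim_{\rm H}F_{\mathcal B}$ (your $F_{\mathcal B}^{(L)}$ plays the role of the accelerated IFS $\Psi_{\mathcal B}^{(p)}$ from Proposition~\ref{katok}), insert exponentially long blocks of $2$ at super-exponentially spaced positions so that Good's criterion (Lemma~\ref{identify}) is met, and then show the insertion costs an arbitrarily small fraction of the dimension. The only genuinely different ingredient is the last step: you propose a direct Moran/mass-distribution computation on $X$, whereas the paper shows that the map collapsing the inserted blocks is H\"older of exponent $1/(1+\varepsilon)$ onto the seed set and invokes the H\"older distortion bound for Hausdorff dimension. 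Both routes are viable; the H\"older-map route avoids having to control local dimensions at the intermediate scales created by the parabolic blocks, which is exactly where your computation is most delicate.

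There is, however, one concrete quantitative error in your verification of Good's criterion. You set $\log\ell_k=(\alpha-2)\chi M_k$ with $M_k$ the number of \emph{new} free digits at stage $k$, and claim $\ell_k\geq q_{m_k}^{\alpha-2}$. But $\log q_{m_k}$ accumulates contributions from \emph{all} free digits up to stage $k$ and from all previously inserted runs, so
\[
\log q_{m_k}\;\gtrsim\;\chi\sum_{j\le k}M_j+\sum_{j<k}\log\ell_j\;\ge\;\chi M_k,
\]
with strict and quantitatively significant excess as soon as $k\ge1$ (under your growth condition $M_k\ge(\alpha-1)\sum_{j<k}M_j$ one gets $\log q_{m_k}\approx 2\chi M_k$). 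Hence $(\alpha-2)\log q_{m_k}>\log\ell_k$ and the inequality $a_{n+1}>q_n^{\alpha-2}$ fails as written. The fix is only a constant: take $\log\ell_k=C(\alpha-2)\chi M_k$ with $C$ large enough to dominate the uniform upper bound $\log q_{m_k}\le C'\sum_{j\le k}M_j+\sum_{j<k}\log\ell_j$ coming from Lemma~\ref{qn-up} (note you need this uniform upper bound over all points of $X$, not the a.e.\ asymptotic $q_m\asymp e^{\chi m}$, which does not hold pointwise on $F_{\mathcal B}^{(L)}$). This changes your dimension formula to $s_L/(1+2C(\alpha-2))$, which still tends to $s_L$ as $\alpha\to2^{+}$, so the theorem survives. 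Two smaller points you should also address: the forced $2$-run can merge with $2$'s at the ends of the adjacent free blocks (the paper prevents this by ending each seed word with a non-parabolic index and appending a digit $t\ne2$ after each run; in your setting the perturbation is bounded by $2L$ and harmless, but it must be said), and the mass-distribution estimate must be checked at the scales reached \emph{during} a parabolic block, where the measure is constant while the cylinder shrinks by $\ell_k^{-2}$ — this is where the minimum local dimension $s_L/(1+2C(\alpha-2))$ is actually attained.
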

From Theorem~\ref{thma} we obtain the following sharp contrast to \eqref{RCF}.
\begin{thm}\label{thmb}
We have \[\lim_{\alpha\to2+0}\dim_{\rm H}(G(\alpha)\cap\{x\in\mathbb R\colon (b_n(x))_{n=1}^\infty\ \text{\rm is bounded}\})=1.\]
\end{thm}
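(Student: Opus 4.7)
The plan is to derive Theorem~\ref{thmb} from Theorem~\ref{thma} by exhausting the set of irrationals with bounded BCF partial quotients by the finite-alphabet fractals $F_\mathcal{B}$. The upper bound is trivial, since the set on the left-hand side lies in $\mathbb{R}$ and so has Hausdorff dimension at most $1$.

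For the lower bound, I would fix any finite $\mathcal{B}\subset\mathbb{N}_{\geq 2}$ with $\#\mathcal{B}\geq 2$ and $2\in\mathcal{B}$. Every $x\in F_\mathcal{B}$ satisfies $b_n(x)\leq\max\mathcal{B}$ for all $n\geq 1$, so
\[F_\mathcal{B}\subset\{x\in\mathbb{R}\colon(b_n(x))_{n=1}^\infty\text{ is bounded}\}.\]
Monotonicity of Hausdorff dimension together with Theorem~\ref{thma} then yields
\[\lim_{\alpha\to 2+0}\dim_{\rm H}\bigl(G(\alpha)\cap\{x\colon(b_n(x))_{n=1}^\infty\text{ bounded}\}\bigr)\geq\lim_{\alpha\to 2+0}\dim_{\rm H}(G(\alpha)\cap F_\mathcal{B})=\dim_{\rm H}F_\mathcal{B}.\]
The theorem follows once the supremum of $\dim_{\rm H}F_\mathcal{B}$ over admissible $\mathcal{B}$ is shown to equal $1$.

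The main obstacle is therefore this last supremum. I would establish it using the RCF-to-BCF transformation recorded in Section~\ref{trans-sec}: if $x\in(0,1)$ has RCF $[0;a_1,a_2,\ldots]$, then its BCF partial quotients consist of blocks of $2$'s of length $a_{2k-1}-1$ interleaved with the values $a_{2k}+2$. Consequently the condition $a_n(x)\leq N$ for all $n\geq 1$ forces $b_n(x)\leq N+2$ for all $n\geq 1$, giving the inclusion
\[E_{\{1,\ldots,N\}}\subset F_{\{2,3,\ldots,N+2\}},\qquad\text{hence}\qquad\dim_{\rm H}F_{\{2,\ldots,N+2\}}\geq\dim_{\rm H}E_{\{1,\ldots,N\}}.\]
The classical result of Jarn\'ik and Good that $\dim_{\rm H}E_{\{1,\ldots,N\}}\to 1$ as $N\to\infty$ then supplies the required convergence $\sup_\mathcal{B}\dim_{\rm H}F_\mathcal{B}=1$. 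Choosing for each $\varepsilon>0$ a set $\mathcal{B}$ with $\dim_{\rm H}F_\mathcal{B}>1-\varepsilon$ and letting $\varepsilon\to 0$ matches the trivial upper bound and finishes the proof.
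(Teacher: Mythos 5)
Your proposal is correct and follows essentially the same route as the paper: both reduce the theorem to the fact that $\dim_{\rm H}F_{\{2,\ldots,M\}}\to 1$ as $M\to\infty$ and then invoke Theorem~\ref{thma} together with monotonicity of Hausdorff dimension. The only cosmetic difference is that you derive this dimension fact explicitly from the inclusion $E_{\{1,\ldots,N\}}\subset F_{\{2,\ldots,N+2\}}$ (via the RCF-to-BCF transformation) and the classical Jarn\'ik theorem, whereas the paper cites it directly from the literature, with the same two ingredients indicated parenthetically.
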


As corollaries to Theorems~\ref{thma} and \ref{thmb} respectively we obtain the following statements.
\begin{cor}
For any finite set $\mathcal B\subset\mathbb N_{\geq2}$ with $\#\mathcal B\geq2$ and 
 $2\in \mathcal B$, we have
 \[\dim_{\rm H}\{x\in F_{\mathcal B}\colon\mu(x)>2\}=\dim_{\rm H}F_{\mathcal B}.\]
\end{cor}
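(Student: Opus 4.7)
The plan is to deduce this essentially from Theorem~\ref{thma} by combining countable stability of Hausdorff dimension with the monotonicity of the Jarn\'ik sets in the parameter $\alpha$. There are two inequalities to establish, one of which is immediate.

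For the upper bound, observe that trivially $\{x\in F_{\mathcal B}\colon\mu(x)>2\}\subseteq F_{\mathcal B}$, so monotonicity of Hausdorff dimension gives
\[\dim_{\rm H}\{x\in F_{\mathcal B}\colon\mu(x)>2\}\leq \dim_{\rm H}F_{\mathcal B}.\]

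For the lower bound, the point is that since $G(\alpha)$ is defined by the strict inequality $\mu(x)\geq\alpha$ and $\alpha>2$, every $x\in G(\alpha)$ has $\mu(x)>2$. Hence for each $\alpha>2$,
\[G(\alpha)\cap F_{\mathcal B}\subseteq \{x\in F_{\mathcal B}\colon\mu(x)>2\},\]
which, combined with Theorem~\ref{thma}, yields
\[\dim_{\rm H} F_{\mathcal B}=\lim_{\alpha\to2+0}\dim_{\rm H}(G(\alpha)\cap F_{\mathcal B})\leq \dim_{\rm H}\{x\in F_{\mathcal B}\colon\mu(x)>2\}.\]
(Equivalently, one may write $\{x\in F_{\mathcal B}\colon\mu(x)>2\}=\bigcup_{n\geq 1}(G(2+1/n)\cap F_{\mathcal B})$ and use countable stability of Hausdorff dimension to get the same conclusion.)

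Combining the two bounds gives the claimed equality. No real obstacle is anticipated here; the content of the corollary lies entirely in Theorem~\ref{thma}, and the present statement is a packaging of that limit into a single dimension identity.
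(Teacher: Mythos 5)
Your proof is correct and matches the paper's (implicit) argument: the corollary is stated as an immediate consequence of Theorem~\ref{thma}, exactly via the inclusion $G(\alpha)\cap F_{\mathcal B}\subseteq\{x\in F_{\mathcal B}\colon\mu(x)>2\}\subseteq F_{\mathcal B}$ and the limit $\alpha\to2+0$. (Minor wording slip: $\mu(x)\geq\alpha$ is not a \emph{strict} inequality, but since $\alpha>2$ it still forces $\mu(x)>2$, so the argument stands.)
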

\begin{cor}\label{corb}We have
\[\{x\in\mathbb R\colon \mu(x)>2 \text{ and } (b_n(x))_{n=1}^\infty\ \text{\rm is bounded}\}=1.\]\end{cor}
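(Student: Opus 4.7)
The plan is to derive the corollary from Theorem~\ref{thmb} by a routine countable-union argument, with Theorem~\ref{thmb} supplying all the substantive content. First I would rewrite the condition $\mu(x)>2$ as a countable union of Jarn\'ik sets. Since $\mu(x)>2$ is equivalent to the existence of $n\in\mathbb N$ with $\mu(x)\geq 2+1/n$, and since $G(\alpha)=\{x\in\mathbb R\setminus\mathbb Q\colon\mu(x)\geq\alpha\}$, writing $B=\{x\in\mathbb R\colon (b_n(x))_{n=1}^\infty\text{ is bounded}\}$ we obtain
\[\{x\in\mathbb R\colon \mu(x)>2\text{ and }(b_n(x))_{n=1}^\infty\text{ is bounded}\}=\bigcup_{n=1}^\infty \bigl(G(2+1/n)\cap B\bigr).\]

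Next I would apply the countable stability of Hausdorff dimension to get
\[\dim_{\rm H}\bigcup_{n=1}^\infty \bigl(G(2+1/n)\cap B\bigr)=\sup_{n\geq 1}\dim_{\rm H}\bigl(G(2+1/n)\cap B\bigr).\]
The family $\{G(\alpha)\}_{\alpha>2}$ is non-increasing in $\alpha$, so the sequence $\dim_{\rm H}(G(2+1/n)\cap B)$ is non-decreasing in $n$; hence the supremum coincides with the one-sided limit $\lim_{\alpha\to 2+0}\dim_{\rm H}(G(\alpha)\cap B)$, which equals $1$ by Theorem~\ref{thmb}. The matching upper bound $\dim_{\rm H}(\cdot)\leq 1$ is trivial since the set in question is contained in $\mathbb R$.

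There is essentially no obstacle here: once Theorem~\ref{thmb} is granted, one only needs the standard reduction of the strict inequality ``$\mu(x)>2$'' to a countable monotone union of the closed-type Jarn\'ik sets $G(2+1/n)$, together with the countable stability of Hausdorff dimension. The entire content of the corollary is thus extracted from Theorem~\ref{thmb} by passing to a limit along a sequence $\alpha_n\to 2+0$.
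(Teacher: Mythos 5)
Your argument is correct and is exactly the deduction the paper intends: Corollary~\ref{corb} is stated as an immediate consequence of Theorem~\ref{thmb}, obtained by letting $\alpha\to2+0$, and your countable-union/countable-stability formalization is a fine (if slightly more elaborate than necessary) way to write this down, since the lower bound already follows from the single inclusion $G(\alpha)\cap B\subset\{x\in\mathbb R\colon\mu(x)>2,\ (b_n(x))_{n=1}^\infty\text{ bounded}\}$ valid for each $\alpha>2$. Note only that the statement as printed omits $\dim_{\rm H}$ on the left-hand side, which your proof correctly supplies.
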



 

Our proof of Theorem~\ref{thma} relies on 
a dynamical systems approach and consists of three steps.
Given a finite set $\mathcal B\subset\mathbb N_{\geq2}$ as in the statement,
 we first construct a {\it seed set} that is independent of $\alpha$ and has Hausdorff dimension close to $\dim_{\rm H}F_{\mathcal B}$. We then construct a subset of $G(\alpha)\cap F_{\mathcal B}$ by modifying the seed set,
 for appropriately chosen $\alpha$ close to $2$. Finally we estimate the Hausdorff dimension of this set from below. 

It is well-known that the BCF is generated by an iteration of the map $T(x)=\frac{1}{1-x}-\lfloor\frac{1}{1-x}\rfloor$ on $[0,1)$ with a neutral fixed point $x=0$ introduced by R\'enyi \cite{R57}.
In particular, $F_{\mathcal B}$ may be viewed as a $T$-invariant set.
For our proofs of the main results, it is more useful to view
$F_{\mathcal B}$ as a subset of the limit set of an Iterated Function System (IFS) with neutral fixed point, called a {\it parabolic IFS} (see Section~\ref{IFS-sec}). The effect of the neutral fixed point must be taken into consideration in all our constructions. From this parabolic IFS we extract an `accelerated' IFS without neutral fixed point (see Proposition~\ref{katok}), and take its limit set as our seed set.

In the second step, 
we modify the seed set by inserting longer and longer blocks of $2$ into sequences of the BCF partial quotients to construct a subset of $G(\alpha)\cap F_{\mathcal B}$ 
with Hausdorff dimension close to $\dim_{\rm H}F_{\mathcal B}$.
According to the transformation formula from the BCF to RCF, 
a long block of $2$ in the sequence of BCF partial quotients is transformed into a single large RCF partial quotient.
We insert blocks of $2$ so that the resultant sequence of RCF partial quotients and partial denominators satisfy the criterion of Good \cite{G41} for
belonging to $G(\alpha)$ (see Lemma~\ref{identify}).

A crucial estimate 
in the last step is that on Euclidean diameters of fundamental intervals. We carefully choose  positions to insert blocks of $2$ and lengths of these blocks, so that the diameters of modified fundamental intervals do not decay substantially (see Lemma~\ref{compare} for details).

The rest of this paper consists of three sections.
In Section~2 we summarize basic properties and preliminary results on the RCF and BCF, and parabolic IFSs. In Section~3 we prove Theorems~\ref{thma} and \ref{thmb}. In Section~4 we comment on the validity of formula for irrationality exponents for the BCF.
\section{Preliminaries}
In Section~\ref{trans-sec} we summarize basic properties of the RCF and BCF. In Section~\ref{IFS-sec} we introduce Iterated Function Systems, and parabolic ones
following Mauldin and Urba\'nski \cite{MauUrb03}.
In Sections~\ref{ass}, \ref{large-sec}, \ref{bound-lem} we summarize preliminary results on parabolic IFSs that will be used in Section~3.
\subsection{Basic properties of the RCF and BCF}\label{trans-sec}
Let $x\in\mathbb R\setminus\mathbb Q$. Define two sequences $(p_n(x))_{n=-1}^\infty$, $(q_n(x))_{n=-1}^\infty$ of integers inductively by
\begin{equation}\label{pq}\begin{split}&p_{-1}(x)=1,\ p_0(x)=a_0(x),\ p_n(x)=a_n(x)p_{n-1}(x)+p_{n-2}(x),\\
&q_{-1}(x)=0,\ q_0(x)=1,\ q_n(x)=a_n(x)q_{n-1}(x)+q_{n-2}(x).\end{split}\end{equation}

\begin{lem}\label{qn-up}For all $x\in\mathbb R\setminus\mathbb Q$ and all $n\in\mathbb N$, we have
\[q_n(x)\leq\prod_{i=1}^{n}(a_{i}(x)+1).\]
\end{lem}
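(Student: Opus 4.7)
The statement is a bound on the continued-fraction denominators in terms of the partial quotients, and my plan is to prove it by straightforward induction on $n$ using the recursion \eqref{pq}.

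For the base case $n=1$, we have $q_1(x)=a_1(x)q_0(x)+q_{-1}(x)=a_1(x)\leq a_1(x)+1$ directly. For $n=2$, $q_2(x)=a_2(x)a_1(x)+1\leq(a_1(x)+1)(a_2(x)+1)$, since the right side equals $a_1(x)a_2(x)+a_1(x)+a_2(x)+1$. This already suggests the shape of the general inductive step.

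For the inductive step, assume the bound for all indices up to $n-1$. Applying the recursion and the inductive hypothesis to both terms,
\[
q_n(x)=a_n(x)q_{n-1}(x)+q_{n-2}(x)\leq a_n(x)\prod_{i=1}^{n-1}(a_i(x)+1)+\prod_{i=1}^{n-2}(a_i(x)+1).
\]
Factoring out $\prod_{i=1}^{n-2}(a_i(x)+1)$, the right-hand side equals
\[
\Bigl(\prod_{i=1}^{n-2}(a_i(x)+1)\Bigr)\bigl(a_n(x)(a_{n-1}(x)+1)+1\bigr),
\]
while the target bound $\prod_{i=1}^{n}(a_i(x)+1)$ equals the same prefactor times $(a_{n-1}(x)+1)(a_n(x)+1)=a_n(x)(a_{n-1}(x)+1)+(a_{n-1}(x)+1)$. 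Hence the inductive step reduces to the trivial inequality $1\leq a_{n-1}(x)+1$, which holds because $a_{n-1}(x)\geq 1$.

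There is no real obstacle here; the only minor point to watch is the indexing at the boundary (the convention $q_{-1}(x)=0$, $q_0(x)=1$), which is why the base case should be verified separately rather than shoehorned into the general step.
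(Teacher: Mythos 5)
Your proof is correct and is essentially the argument the paper gives: both are inductions on the recursion \eqref{pq}. The paper's version is marginally slicker --- it observes that $(q_n(x))_{n\geq0}$ is increasing, so $q_n(x)=a_n(x)q_{n-1}(x)+q_{n-2}(x)\leq(a_n(x)+1)q_{n-1}(x)$, and iterates --- whereas you carry both terms through a strong induction and reduce to $1\leq a_{n-1}(x)+1$; the two reductions are equivalent.
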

\begin{proof} By the definition \eqref{pq}, $(q_n(x))_{n=0}^\infty$ is increasing. Hence we have 
$q_n(x)\leq (a_n(x)+1)q_{n-1}(x)$ for  all $n\geq1$, which implies the desired inequality. \end{proof}

It is well-known \cite{Khi64} that for any
$x\in\mathbb R\setminus\mathbb Q$ and any $n\geq1$, \begin{equation}\label{appeq}\frac{1}{q_n(x)(q_n(x)+q_{n+1}(x))}<\left|x-\frac{p_n(x)}{q_n(x)}\right|<\frac{1}{q_n(x)q_{n+1}(x)}.\end{equation}
As in \cite[pp.204--205]{G41}, using \eqref{pq}, \eqref{appeq} and the fact that $(p_n(x)/q_n(x))_{n=1}^\infty$ provides the best rational approximation to $x$, one can show the following identification of the Jarn\'ik sets.
  \begin{lem}[{\cite[pp.204--205]{G41}}]\label{identify}For any $\alpha\in(2,\infty)$ we have
\[G(\alpha)=\{x\in\mathbb R\setminus\mathbb Q\colon a_{n+1}(x)> q_n^{\alpha-2}(x)\ \text{ for infinitely many }n\in\mathbb N\}.\]
\end{lem}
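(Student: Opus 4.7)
The plan is to prove the two inclusions separately, using \eqref{pq} to convert between partial quotients $a_{n+1}(x)$ and denominators $q_{n+1}(x)$, and \eqref{appeq} to convert between $q_{n+1}(x)$ and approximation errors $|x - p_n(x)/q_n(x)|$.

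For the inclusion $\supset$, which is the direction actually invoked in the main construction of Theorem~\ref{thma}, suppose $a_{n+1}(x) > q_n(x)^{\alpha - 2}$ for infinitely many $n$. For each such $n$, \eqref{pq} gives $q_{n+1}(x) \geq a_{n+1}(x) q_n(x) > q_n(x)^{\alpha - 1}$, and the upper bound in \eqref{appeq} then yields
\[\left|x - \frac{p_n(x)}{q_n(x)}\right| < \frac{1}{q_n(x) q_{n+1}(x)} < \frac{1}{q_n(x)^\alpha}.\]
Because the convergents $p_n(x)/q_n(x)$ are in lowest terms and this estimate holds for infinitely many $n$, we obtain $\mu(x) \geq \alpha$, and so $x \in G(\alpha)$.

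For the inclusion $\subset$, let $x \in G(\alpha)$ and fix $\alpha' \in (2, \alpha)$. By the definition of $\mu(x) \geq \alpha$, there are infinitely many coprime pairs $(p, q)$ with $|x - p/q| < 1/q^{\alpha'}$. Since $\alpha' > 2$, for $q$ sufficiently large this forces $|x - p/q| < 1/(2 q^2)$, so Legendre's best-approximation theorem implies $p/q$ must be a convergent $p_n(x)/q_n(x)$. The lower bound in \eqref{appeq} combined with \eqref{pq} then yields $a_{n+1}(x) > q_n(x)^{\alpha' - 2}/3$ for infinitely many $n$. One then upgrades this to the sharp exponent $\alpha - 2$ by letting $\alpha' \nearrow \alpha$ and exploiting the rapid growth of $q_n(x)$.

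The main obstacle is the upgrade step in the reverse inclusion: the supremum definition of $\mu(x)$ only directly provides good approximations for each $\alpha' < \alpha$, and converting these into the strict inequality $a_{n+1}(x) > q_n(x)^{\alpha - 2}$ at the exponent $\alpha$ itself requires the delicate borderline analysis of Good~\cite{G41}. The forward inclusion, by contrast, is the clean content that suffices for the subsequent Diophantine constructions.
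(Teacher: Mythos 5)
Your argument for the inclusion $\supset$ is correct and complete, and it rests on precisely the ingredients the paper names when it defers to \cite{G41}: the recursion \eqref{pq}, the two-sided bound \eqref{appeq}, and the coprimality of $p_n(x)$ and $q_n(x)$. The paper offers no written proof beyond that citation, and the $\supset$ inclusion is the only one it ever invokes (in Lemma~\ref{subset-lem}, to pass from $a_{n+1}(x(y))>q_n^{\alpha-2}(x(y))$ infinitely often to $x(y)\in G(\alpha)$), so this part of your proposal is the operative content and is in order.

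The inclusion $\subset$ is a different matter: the ``upgrade step'' you flag is not a delicate point that \cite{G41} resolves, but a genuine obstruction, because the indices $n$ witnessing $a_{n+1}(x)>q_n^{\alpha'-2}(x)/3$ depend on $\alpha'$ and escape to infinity as $\alpha'\nearrow\alpha$. In fact the inclusion fails as stated: since $\mu(x)$ is defined as a supremum, $\mu(x)\geq\alpha$ only guarantees approximations of each order $\alpha'<\alpha$, not of order $\alpha$ itself. Concretely, fix $n_0>(\alpha-2)^{-2}$ and choose $a_{n+1}(x)=\lfloor q_n(x)^{\alpha-2-1/\sqrt{n}}\rfloor$ for all $n\geq n_0$. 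Then $q_{n+1}(x)\geq a_{n+1}(x)q_n(x)\geq\tfrac12 q_n(x)^{\alpha-1-1/\sqrt{n}}$, so \eqref{appeq} gives $|x-p_n(x)/q_n(x)|<2\,q_n(x)^{-\alpha+1/\sqrt{n}}<q_n(x)^{-\alpha'}$ for every fixed $\alpha'<\alpha$ and all large $n$; hence $\mu(x)\geq\alpha$ and $x\in G(\alpha)$. Yet $a_{n+1}(x)<q_n(x)^{\alpha-2}$ for all $n\geq n_0$, so $x$ does not belong to the right-hand side. The equality in Lemma~\ref{identify} should therefore be read as the inclusion $\supset$ (equivalently, with the right-hand side replaced by the condition $\limsup_{n}\log a_{n+1}(x)/\log q_n(x)\geq\alpha-2$, which is \eqref{formula}); your proof of that inclusion is sound, and nothing in the paper uses the converse.
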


\if0
Below are well-known examples of SRCFs \cite{IK}:
\begin{itemize}
\item $\sigma_n=-1$ for all $n\geq1$, called the {\it backward} (aka {\it minus}, or {\it negative}) continued fraction (BCF);
\item $c_n+\sigma_{n+1}\geq2$ and $c_n\geq2$ for all $n\geq1$, called the {\it nearest integer} continued fraction (NICF);
\item $c_n+\sigma_{n}\geq2$ and $c_n\geq2$ for all $n\geq1$, called the {\it singular} continued fraction (SCF);
\item $\sigma_n\in\{1,-1\}$ and $c_n$ is an even integer for all $n\geq1$, called the {\it even-integer} continued fraction (EICF) \cite{Sch82,Sch84}.
\end{itemize}
\fi


The RCF and BCF expansions of each
 $x\in\mathbb R\setminus\mathbb Q$ are transformed from one to the other as follows. In terms of $(a_n(x))_{n=1}^\infty$, the BCF expansion of $x$ is written as
\[x=\lfloor x\rfloor+1-\underbrace{\confrac{1 }{2} -\cdots- \confrac{1 }{2 }}_{a_1(x)-1 \text{ times}} -\confrac{1}{a_2(x)+2} -\underbrace{\confrac{1 }{2} -\cdots- \confrac{1 }{2 }}_{a_3(x)-1 \text{ times}} -\confrac{1}{a_4(x)+2}-\cdots
,\]
see 
\cite[Proposition~2,\ Remark~1]{DajKra00}. 
Conversely, 
put $n_0=0$ and define a sequence $(n_k)_{k=1}^\infty$ of positive integers inductively by
\[b_n(x)=2\ \text{ if }n_{k-1}<n<n_k,\text{ and } b_{n_k}(x)\geq3.\]
Then we have $a_0(x)=\lfloor x\rfloor$ and
\[a_{2k-1}(x)=n_{k}-n_{k-1},\ a_{2k}(x)=b_{n_k}(x)-2 \ \text{ for }k\geq1.\]
The point is that a long block of $2$ is transformed into a single large partial quotient.

\subsection{Parabolic Iterated Function System}\label{IFS-sec}
Let $X$ be a compact interval with positive Euclidean diameter.
Let $\mathcal I$ be a subset of $\mathbb N$ with $\#\mathcal I\geq2$, and 
let  $\phi_i\colon X\to X$ $(i\in \mathcal I)$ be $C^1$ maps, i.e., each $\phi_i$ can be extended to a $C^1$ map on an open set containing $X$.
The collection
$\Phi=\{\phi_i\}_{i\in \mathcal I}$ is called  {\it an Iterated Function System (IFS)} on $X$. It is called {\it an infinite (resp. finite)} IFS if $\mathcal I$ is an infinite (resp. finite) set.  We say an IFS $\Phi$ satisfies {\it the open set condition} if for all distinct indices $i,j\in \mathcal I,$ 
\[
\phi_i({\rm int}X)\cap \phi_j({\rm int}X)=\emptyset.
\]

Let $\Phi=\{\phi_i\}_{i\in \mathcal I}$
be an IFS on $X$. For $\omega=(\omega_1,\omega_2,\ldots)\in {\mathcal I}^{\mathbb N}$ and $n\in \mathbb{N}$, we set
\[\phi_{\omega_1\cdots \omega_n}=\phi_{\omega_1}\circ\cdots\circ\phi_{\omega_n}.\]
If the set $\bigcap_{n=1}^{\infty}\phi_{\omega_1\cdots\omega_{n}}(X)$ is a singleton for any 
$\omega\in {\mathcal I}^{\mathbb N}$,  we
 define {\it an address map} 
 $\Pi \colon {\mathcal I}^{\mathbb N} \to X$ by
\[\Pi(\omega)\in \bigcap_{n=1}^{\infty}\phi_{\omega_1\cdots\omega_n}(X),\]
and define {\it the limit set} 
 \[
\Lambda(\Phi)=\Pi( {\mathcal I}^{\mathbb N} ).
\]
Since the address map may not be injective, we introduce the set \[\Lambda'(\Phi)=\{x\in \Lambda(\Phi)\colon\#\Pi^{-1}(x)=1\}.\] 
Since $X$ is an interval, if the open set condition holds then
 $\Lambda(\Phi)\setminus \Lambda'(\Phi)$ is countable and of Hausdorff dimension zero.
For each $x\in \Lambda'(\Phi)$,
 there is a unique sequence $(\omega_n(x))_{n=1}^\infty\in {\mathcal I}^\mathbb N$  
satisfying $x=\Pi((\omega_n(x))_{n=1}^\infty)$.
Note that
\[
x = \lim_{n\to\infty} \phi_{\omega_1(x)} \circ \cdots \circ \phi_{\omega_n(x)} (y)\ \text{ for all }y\in X.
\]

An IFS $\Phi$ on $X$ is called {\it parabolic}
if the open set condition holds, and the following two conditions hold:
\begin{itemize}

\item[(A1)] (Non-uniform contraction) $|\phi'_i(x)|<1$ everywhere except for finitely many pairs $(i,x_i)$, $i\in \mathcal I$, for which $x_i$ is the unique fixed point of $\phi_i$ and $|\phi'_i(x_i)|=1$.
Such pairs and indices $i$ are called {\it parabolic}.

\item[(A2)] (Bounded distortion) 
There exists a constant $C\ge 1$ such that for all $\omega\in {\mathcal I}^{\mathbb N}$ and 
 $n\in\mathbb N_{\geq2}$ such that $\omega_{n}$ is not a parabolic index, or else $\omega_{n-1}\neq\omega_n$, 
\[
|\phi'_{\omega_1\cdots\omega_n}(x)|\le C|\phi'_{\omega_1\cdots\omega_n}(y)|\ \text{ for all }x,y\in X.
\]

\end{itemize}

 \begin{rem}
For any parabolic IFS on a compact interval, the address map and the limit set are well-defined, see
\cite[Proposition~3.1]{JT} and \cite[Lemma~2.2]{T25} for example.\end{rem}

\subsection{Saturation and mild distortion}\label{ass}
Let $\Phi=\{\phi_i\}_{i\in \mathcal I}$ be a parabolic IFS. For each $n\in\mathbb N$ let ${\mathcal I}^n$ denote the set of words from 
$\mathcal I$ with word length $n$. We endow the Cartesian product ${\mathcal I}^{\mathbb N}$ with the product topology of the discrete topology on $\mathcal I$.
Let $\mathcal M$ denote the set of shift invariant ergodic 
 Borel probability measures on ${\mathcal I}^{\mathbb N}$. For each $\nu\in\mathcal M$, 
 define 
 \[\chi(\nu)=-\int\log|\phi'_{\omega_1 }(\Pi(\omega))| d\nu(\omega)\in[0,\infty],\]
 where $\Pi\colon{\mathcal I}^{\mathbb N}\to\Lambda(\Phi)$ denotes the address map and $\omega_1$ denotes the first coordinate of $\omega\in{\mathcal I}^{\mathbb N}$.
  For a Borel probability measure $\mu$ on $[0,1]$, define \[\dim(\mu)=\inf\{\dim_{\rm H}A\colon A\subset[0,1],\ \mu(A)=1\}.\]
We say $\Phi$ is {\it saturated} if 
 \[\dim_{\rm H}\Lambda(\Phi)=\sup\left\{\dim(\nu\circ\Pi^{-1})\colon \nu\in\mathcal M,\ \chi(\nu)<\infty\right\}.\]
For each $n\in\mathbb N$
we define \[D_n(\Phi)=\sup_{\omega_1\cdots \omega_n\in {\mathcal I}^n }\max_{x,y\in [0,1] }\log\frac{\phi'_{\omega_1\cdots \omega_n}(x)}{\phi'_{\omega_1\cdots \omega_n}(y)}.\]
We say $\Phi$ has {\it mild distortion} if 
$D_1(\Phi)<\infty$ and  $D_n(\Phi)=o(n)$. 


\begin{prop}\label{sat-prop}
Let $\Phi=\{\phi_i\}_{i\in \mathcal I}$ be a 
parabolic finite IFS such that $\phi_i$ is $C^2$ for each $i\in \mathcal I$.
Then $\Phi$ is saturated and has mild distortion.
\end{prop}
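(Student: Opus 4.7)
The plan is to verify mild distortion and saturation separately: the former by standard distortion estimates for $C^2$-parabolic maps, the latter by an inducing (acceleration) construction in the style of Mauldin--Urba\'nski.

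For mild distortion, $D_1(\Phi)<\infty$ is immediate from $C^2$ regularity on the compact interval $X$, finiteness of $\mathcal I$, and the fact that $|\phi_i'|$ is bounded away from $0$ and $1$ on $X$ (it equals $1$ only at parabolic fixed points, and $\phi_i$ is an injective $C^2$ map elsewhere). For $D_n(\Phi)=o(n)$, I would decompose any word $\omega_1\cdots\omega_n$ at its maximal terminal block: if the last two letters satisfy the ``good'' condition in (A2), the distortion is bounded by $\log C$ directly; otherwise $\omega$ ends in a maximal parabolic run $i^k$ and I factor $\phi_\omega=\phi_u\circ\phi_{i^k}$, applying (A2) to $\phi_u$ (iterating the peeling if its end is also bad) together with the standard $C^2$-normal form at $x_i$, which gives $\log(\sup_X|\phi_{i^k}'|/\inf_X|\phi_{i^k}'|)=O(\log k)$. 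A subtlety is that composites of parabolic runs targeting different fixed points are themselves uniform contractions (their derivatives stay strictly below $1$ on $X$), so the naive run-by-run peeling must be refined to a ``good block'' decomposition to avoid a linear-in-$n$ overcount; once this is done, $D_n(\Phi)=O(\log n)=o(n)$ follows.

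For saturation, I would use the inducing technique of \cite{MauUrb03}. Letting $\mathcal P\subset\mathcal I$ denote the (finite) set of parabolic indices, I define the accelerated IFS
\[
\Phi^{\ast}=\{\phi_{i^k j}:i\in\mathcal P,\ j\in\mathcal I\setminus\{i\},\ k\ge 0\}\cup\{\phi_j:j\in\mathcal I\setminus\mathcal P\},
\]
whose generators are uniform contractions because their terminal symbols move images away from every parabolic fixed point; mild distortion of $\Phi$ then promotes $\Phi^{\ast}$ to a conformal infinite IFS with bounded distortion in the sense of \cite{MauUrb03}. The limit set $\Lambda(\Phi^{\ast})$ agrees with $\Lambda(\Phi)$ outside the countable set of forward orbits of parabolic fixed points, so $\dim_{\rm H}\Lambda(\Phi^{\ast})=\dim_{\rm H}\Lambda(\Phi)$. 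By the \cite{MauUrb03} approximation theorem, $\dim_{\rm H}\Lambda(\Phi^{\ast})=\sup\{\dim_{\rm H}\Lambda(\Psi):\Psi\text{ a finite sub-IFS of }\Phi^{\ast}\}$, and for each finite $\Psi$ Bowen's formula yields an ergodic Bernoulli (Gibbs) measure on its shift attaining $\dim_{\rm H}\Lambda(\Psi)$. Since the generators of $\Psi$ correspond to $\mathcal I$-words of uniformly bounded length, this measure lifts through the natural renewal map to a shift-invariant ergodic $\nu\in\mathcal M$ on $\mathcal I^{\mathbb N}$ with bounded return time, yielding $\chi(\nu)<\infty$ and $\dim(\nu\circ\Pi^{-1})=\dim_{\rm H}\Lambda(\Psi)$; exhausting $\Phi^{\ast}$ by finite sub-IFSs gives saturation.

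The hardest part is the transfer of information between $\Phi$ and $\Phi^{\ast}$: the finite-sub-IFS approximation of dimension lives naturally on the accelerated side, but saturation requires measures on $\mathcal I^{\mathbb N}$ itself. The uniformly bounded inducing time built into finite sub-IFSs of $\Phi^{\ast}$, combined with mild distortion of $\Phi$, is what keeps $\chi(\nu)$ finite and preserves the pushforward dimension under pullback; the ingredients for implementing this transfer are largely assembled in \cite{T25} and \cite{MauUrb03}, and the task is to verify them cleanly in the present finite-$C^2$ parabolic setting.
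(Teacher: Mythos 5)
Your proposal and the paper's proof take genuinely different routes. The paper proves almost nothing directly: it observes that finiteness of $\mathcal I$ together with $C^2$ regularity automatically yields the uniform bound $\sup_{i\in\mathcal I}\max_{x\in X}|(\log|\phi_i'(x)|)'|<\infty$, and then invokes \cite{T25} (which rests on \cite{JT}), where mild distortion and saturation are established for parabolic infinite $C^2$ IFSs under exactly that hypothesis; the only additional remark is that saturation in the present setting is obtained by ``tracing'' the proof of \cite[Proposition~4.1]{T25}. You instead reconstruct the machinery that the citation encapsulates: a peeling/block decomposition for $D_n(\Phi)=o(n)$, and for saturation the Mauldin--Urba\'nski acceleration $\Phi^{\ast}$, finite-subsystem approximation, Bowen's formula, and an Abramov/Kac-type lift of Gibbs measures back to $\mathcal I^{\mathbb N}$. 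This is very likely the same mathematics underlying the cited result, so your version buys self-containedness at the cost of having to carry out several technical verifications that the paper simply outsources.

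Two of those verifications deserve flags. First, the claim $D_n(\Phi)=O(\log n)$ is both stronger than needed and false in general: a $C^2$ (even $C^\infty$) parabolic branch can have tangency of order higher than quadratic, e.g. $\phi_i(x)=x-cx^{3}$ near $x_i=0$, in which case $|\phi_{i^j}(X)|\asymp j^{-1/2}$ and the distortion of the pure run $\phi_{i^n}$ over $X$ already grows like $n^{1/2}$. The correct conclusion is only $D_n(\Phi)=o(n)$, and the ``good block decomposition'' you defer is exactly where the work lies: one must interpolate between the run-length bound for blocks near the end of the word and the geometric decay of tail images for blocks far from the end. Second, in the saturation step, the passage from a Gibbs measure on the accelerated shift to a shift-invariant ergodic $\nu\in\mathcal M$ on $\mathcal I^{\mathbb N}$ with $\chi(\nu)<\infty$ and $\dim(\nu\circ\Pi^{-1})=\dim_{\rm H}\Lambda(\Psi)$ is asserted rather than proved; the bounded return time makes it plausible, but the dimension of the pushforward of the lifted measure (a convex combination of images under the intermediate maps) still has to be checked. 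Neither point is fatal, but both are precisely the details the paper avoids by deferring to \cite{T25}.
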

\begin{proof}Using results in \cite{JT}, it was proved in \cite{T25} that if 
$\Xi=\{\xi_i\}_{i\in \mathcal J}$ is a parabolic infinite IFS such that $\xi_i$ is $C^2$ for each $i\in \mathcal J$ and
$\sup_{i\in \mathcal J}\max_{x\in[0,1]}|(\log| \xi'_{i}(x)|)'|$ is finite, then $\Xi$ is saturated and has mild distortion. Hence $\Phi$ has mild distortion. Since $\Phi$ is a finite IFS, the uniform boundedness on logarithms of derivatives clearly holds. Tracing the proof of \cite[Proposition~4.1]{T25} one can show that $\Phi$ is saturated. \end{proof}


\subsection{Existence of finite IFSs with large limit sets}\label{large-sec}
Given an IFS $\Phi=\{\phi_i\}_{i\in \mathcal I}$ on $[0,1]$, for $n\in\mathbb N$ and $\omega=(\omega_1,\ldots,\omega_n)\in {\mathcal I}^{\mathbb N}$ define {\it a fundamental interval of order $n$} by
\[I(\omega)=I(\omega_1,\ldots,\omega_n)=\phi_{\omega_1\cdots \omega_n}([0,1]).\]
For convenience, let us call $[0,1]$ the fundamental interval of order $0$.
For each $n\in\mathbb N$, $n$-th fundamental intervals are either disjoint, coincide or intersect only at their boundary points.

Given a parabolic IFS, using
the next proposition one can find a family of finite IFSs without parabolic indices whose limit sets approximate that of the original parabolic IFS in terms of Hausdorff dimension.

\begin{prop}[{\cite[Proposition~2.4]{T25}}]\label{katok} Let $\Phi=\{\phi_i\}_{i\in \mathcal I}$ be a parabolic IFS on $[0,1]$ that is saturated and has mild distortion.
For any $\varepsilon>0$ there exist 
an integer
 $p\geq2$, a constant $\gamma>0$ and 
a non-empty finite set $W^{(p)}\subset {\mathcal I}^{p}$ with the following properties:

\begin{itemize}

\item[(a)] for all distinct elements $\omega,\eta\in W^{(p)}$, $I(\omega)\cap I(\eta)=\emptyset;$

\item[(b)] for any $\omega=\omega_1\cdots \omega_{p}\in  W^{(p)}$, $\omega_p\in \mathcal I$ is not a parabolic index;

\item[(c)] for any $\omega\in W^{(p)}$, $\max_{x\in[0,1]}|\phi'_{\omega}(x)|<e^{-\gamma p}$;


\item[(d)]
the finite IFS
$\Phi^{(p)}=\{\phi_{\omega} \}_{\omega\in 
W^{(p)} }$ on $[0,1]$ has no parabolic index, and satisfies
\[\Lambda(\Phi^{(p)})\subset \Lambda(\Phi)
\text{ and }
\dim_{\rm H}\Lambda(\Phi^{(p) })>\dim_{\rm H}\Lambda(\Phi)-\varepsilon.\]
\end{itemize}
\end{prop}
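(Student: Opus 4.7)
The plan is to use saturation to select an ergodic shift-invariant measure $\nu$ on $\mathcal I^{\mathbb N}$ with $\chi(\nu)<\infty$ whose projection $\mu:=\nu\circ\Pi^{-1}$ has $\dim(\mu)$ close to $\dim_{\rm H}\Lambda(\Phi)$, and then to harvest from $\nu$-typical cylinders of a large common order $p$ a finite sub-collection $W^{(p)}$ satisfying (a)--(d) simultaneously. Given $\varepsilon>0$, saturation produces such $\nu$ with $\dim(\mu)>\dim_{\rm H}\Lambda(\Phi)-\varepsilon/2$. Positivity of this dimension forces $\chi:=\chi(\nu)>0$ and the existence of a non-parabolic $j_*\in\mathcal I$ with $\nu([j_*])>0$, since otherwise $\nu$ would be carried by the finite set of parabolic fixed sequences and $\dim(\mu)$ would vanish. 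By definition of $\dim(\mu)$, for each small $\eta>0$ there exist a Borel set $G\subset[0,1]$ with $\mu(G)>1/2$ and $r_0>0$ such that $\mu(B(y,r))\le r^{\dim(\mu)-\eta}$ for all $y\in G$ and $r\in(0,r_0]$.

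Next I would apply the Birkhoff ergodic theorem to the cocycle $-\log|\phi'_{\omega_1}(\Pi(\omega))|$ and to $\mathbf 1_{[j_*]}$, combined with the mild distortion estimate $D_p(\Phi)=o(p)$, to produce for all sufficiently large $p$ a set $B_p\subset\mathcal I^{\mathbb N}$ of $\nu$-measure at least a fixed $\tau>0$ on which: (i) $\Pi(\omega)\in G$; (ii) $e^{-p(\chi+\eta)}\le|\phi'_{\omega_1\cdots\omega_p}(x)|\le e^{-p(\chi-\eta)}$ for every $x\in[0,1]$; and (iii) $\omega_p=j_*$. Property (ii) upgrades a pointwise Birkhoff estimate to a uniform-in-$x$ estimate via mild distortion, and (iii) reflects the positive $\nu$-density of times visiting $[j_*]$. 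Let $\widetilde W^{(p)}\subset\mathcal I^p$ consist of the $p$-prefixes of elements of $B_p$, so (ii), (iii) hold for every $\omega\in\widetilde W^{(p)}$, and each $I(\omega)=\phi_\omega([0,1])$ contains some point of $G$. The pointwise bound on $\mu$ then gives $\nu([\omega])=\mu(I(\omega))\le(\mathrm{diam}\,I(\omega))^{\dim(\mu)-\eta}\le e^{-p(\chi-\eta)(\dim(\mu)-\eta)}$ for every $\omega\in\widetilde W^{(p)}$, and summation over the cylinders yields
\[
\#\widetilde W^{(p)}\ge c_1\,e^{\,p(\chi-\eta)(\dim(\mu)-\eta)}
\]
for some $c_1>0$.

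To pass from the OSC-disjointness of interiors of the $I(\omega)$ to the strict disjointness required in (a), I would thin $\widetilde W^{(p)}$ to $W^{(p)}$ by a greedy alternating procedure along the real line, losing at most a factor $3$ in cardinality. Properties (b) and (c) follow from (iii) and (ii) with $\gamma:=\chi-\eta$. Because $j_*$ is non-parabolic, every composition of $\Phi^{(p)}$-generators again ends in $j_*$, so hypothesis (A2) delivers uniform bounded distortion for $\Phi^{(p)}$ at all its own iteration scales. The uniform Bernoulli measure on $(W^{(p)})^{\mathbb N}$ then pushes forward through $\Phi^{(p)}$ to a measure on $\Lambda(\Phi^{(p)})$ that assigns mass $(\#W^{(p)})^{-n}$ to each length-$n$ cylinder of diameter at least $c\,e^{-np(\chi+\eta)}$, and the mass distribution principle yields
\[
\dim_{\rm H}\Lambda(\Phi^{(p)})\ge\frac{\log\#W^{(p)}}{p(\chi+\eta)}\ge\frac{(\chi-\eta)(\dim(\mu)-\eta)-o(1)}{\chi+\eta}\xrightarrow[\eta\to 0,\,p\to\infty]{}\dim(\mu),
\]
which exceeds $\dim_{\rm H}\Lambda(\Phi)-\varepsilon$ for suitable $\eta$ and $p$, giving (d).

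The principal obstacle is the tension between enforcing the non-parabolic terminal condition (iii) and maintaining the uniform contraction/distortion estimate in (ii), because parabolic fixed points preclude uniform bounded distortion across arbitrary length-$p$ cylinders. The mild distortion hypothesis $D_p(\Phi)=o(p)$ is exactly what permits the resulting subexponential distortion loss to be absorbed into the $\eta$-slack. Placing $j_*$ at the terminal position (rather than at some interior coordinate) is what makes each $\phi_\omega$ strictly contracting and what allows $\Phi^{(p)}$ to inherit uniform bounded distortion at all its own iteration scales---both essential for the Moran-type lower bound delivering (d).
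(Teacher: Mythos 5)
You should first note that the paper itself contains no proof of Proposition~\ref{katok}: it is imported verbatim from \cite{T25} (with only a remark that the inclusion $\Lambda(\Phi^{(p)})\subset\Lambda(\Phi)$ follows from the proof there), so your argument can only be judged on its own terms. Your overall architecture --- a Katok-type extraction of a uniformly contracting finite subsystem from a near-saturating ergodic measure, with mild distortion absorbing the subexponential distortion loss, followed by thinning for (a) and a mass-distribution lower bound for (d) --- is the right one, and those later steps are fine. However, two earlier steps have genuine gaps.

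First, your claim that $\dim(\mu)>0$ forces a non-parabolic index $j_*$ with $\nu([j_*])>0$, ``since otherwise $\nu$ would be carried by the finite set of parabolic fixed sequences,'' is false in general. If every letter of positive $\nu$-measure is parabolic, shift-invariance only confines $\nu$ to $P^{\mathbb N}$, where $P$ is the finite set of parabolic indices; for $\#P\geq 2$ this subshift carries ergodic measures of positive entropy and positive dimension, since words mixing two distinct parabolic letters are eventually uniformly contracting. Your deduction happens to be valid for the application in this paper (where $2$ is the unique parabolic index of $\Psi_{\mathcal B}$), but not for the general statement; one must instead append a fixed non-parabolic letter to typical words, or exploit the second clause of (A2) (requiring $\omega_{p-1}\neq\omega_p$). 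Second, the set $B_p$ on which (i), (ii), (iii) hold simultaneously with $\nu(B_p)\geq\tau>0$ is asserted rather than obtained. Since $\dim(\mu)=\mathrm{ess\,sup}_\mu\,\underline{d}_\mu$, the set $G$ has positive but possibly arbitrarily small measure (your ``$\mu(G)>1/2$'' is not available), and $\nu(\{\omega\colon\omega_p=j_*\})=\nu([j_*])$ may also be small; intersecting these two sets with the Egorov set can be empty, as the naive inclusion-exclusion bound is negative when $\mu(G)+\nu([j_*])\leq 1$. Closing this requires an extra device, e.g.\ a pigeonhole over stopping times $k\in[(1-\kappa)p,\,p]$ with $\omega_k=j_*$ (a polynomial loss in the count is harmless), or replacing the local-dimension count by a Shannon--McMillan--Breiman estimate $\nu([\omega_1\cdots\omega_p])\leq e^{-p(h(\nu)-\eta)}$, whose typical set has measure tending to $1$, combined with the volume lemma $\dim(\nu\circ\Pi^{-1})=h(\nu)/\chi(\nu)$.
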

\begin{rem}Although the condition 
$\Lambda(\Phi^{(p)})\subset \Lambda(\Phi)$ is not stated in \cite[Proposition~2.4]{T25}, it immediately follows from the proof there.\end{rem}



\subsection{Bounded distortion and bounds on  weak contraction}\label{bound-lem}
Given a parabolic IFS,
we will evaluate diameters of its fundamental intervals in terms of derivatives of the IFS.
To this end we need the following distortion bounds and lower bounds on contraction near 
 neutral fixed points.
\begin{lem}\label{dist-loc}
Let $\Phi=\{\phi_i\}_{i\in \mathcal I}$ be a 
parabolic IFS on $[0,1]$.
There exist constants $K_0\geq1$, $K_1>0$ such that for any parabolic index
$(j,x_j)$ of $\Phi$ with $x_j=0$ the following statements hold:
\begin{itemize}
\item[(a)] 
for all $x,y\in[\phi_{j}(1),1]$ and all $n\in\mathbb N$, 
\[\frac{|\phi'_{j^{n}}(y)|}{|\phi'_{j^{n}}(x)|}\leq K_0|x-y|;\] 
\item[(b)] 
for all $i\in \mathcal I\setminus\{j\}$, 
 $x\in[0,1]$ and all integer $n\geq2$, 
\[\frac{|\phi'_{j^{n}}(\phi_i(x))|}{|\phi_{j^n}(1)-\phi_{j^{n-1}}(1)|}\geq K_1.\]
\end{itemize}
\end{lem}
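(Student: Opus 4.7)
The plan is to first establish (a) as a bounded-distortion estimate for the pure iterates $\phi_{j^n}$ restricted to the slab $[\phi_j(1),1]$, which sits at positive distance from the neutral fixed point $0$, and then to derive (b) from (a) together with a uniform positive lower bound on $|\phi'_j|$ on $[0,\phi_j(1)]$.

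For (a), I would apply the chain rule to obtain the telescoping identity
\[
\log\frac{|\phi'_{j^n}(y)|}{|\phi'_{j^n}(x)|} \;=\; \sum_{k=0}^{n-1}\log\frac{|\phi'_j(\phi_{j^k}(y))|}{|\phi'_j(\phi_{j^k}(x))|},
\]
and bound each summand by $M\,|\phi_{j^k}(y)-\phi_{j^k}(x)|$, where $M=\sup|(\log|\phi'_j|)'|$ is finite under the $C^2$ regularity implicit whenever this lemma is invoked (compare Proposition~\ref{sat-prop}). The core task is then to bound $|\phi_{j^k}(y)-\phi_{j^k}(x)|$ by a summable sequence in $k$. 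Both iterates lie in the fundamental interval $[\phi_{j^{k+1}}(1),\phi_{j^k}(1)]$, whose length decays polynomially by the standard asymptotics of orbits at a neutral fixed point: writing the local form $\phi_j(u)=u-cu^{1+\alpha}+o(u^{1+\alpha})$, one has $\phi_{j^k}(1)\asymp k^{-1/\alpha}$ and $\phi_{j^k}(1)-\phi_{j^{k+1}}(1)\asymp k^{-(1+1/\alpha)}$. A Koebe/mean-value argument, bootstrapped with (a) at smaller generations, upgrades this to $|\phi_{j^k}(y)-\phi_{j^k}(x)|\lesssim|x-y|\cdot k^{-(1+1/\alpha)}$; the resulting series converges, and exponentiating yields the desired Lipschitz-type form of (a).

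For (b), the open set condition combined with $i\ne j$ forces $\phi_i([0,1])\subset[\phi_j(1),1]$, placing $\phi_i(x)$ on the slab where (a) applies. The mean value theorem gives a point $\xi\in[\phi_j(1),1]$ with
\[
|\phi_{j^n}(1)-\phi_{j^{n-1}}(1)| \;=\; |\phi_{j^{n-1}}(1)-\phi_{j^{n-1}}(\phi_j(1))| \;=\; |\phi'_{j^{n-1}}(\xi)|\,(1-\phi_j(1)),
\]
and by (a) the factor $|\phi'_{j^{n-1}}(\xi)|$ is comparable, up to a multiplicative constant, to $|\phi'_{j^{n-1}}(\phi_i(x))|$. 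Factoring
\[
|\phi'_{j^n}(\phi_i(x))| \;=\; |\phi'_j(\phi_{j^{n-1}}(\phi_i(x)))|\cdot|\phi'_{j^{n-1}}(\phi_i(x))|
\]
and noting that for $n\ge 2$ the inner argument $\phi_{j^{n-1}}(\phi_i(x))$ lies in the compact set $[0,\phi_j(1)]$, on which $|\phi'_j|$ is continuous, equal to $1$ at $0$, and nonzero elsewhere, the first factor is bounded below by some $c>0$. Combining the three estimates yields (b).

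The main obstacle is the distortion bound (a): condition (A2) in the definition of a parabolic IFS explicitly excludes the all-parabolic word $j^n$ addressed here, so (a) does not follow from (A2) and must instead be produced by exploiting the precise polynomial decay of the fundamental-interval lengths near the neutral fixed point. Once (a) is in hand, (b) reduces to the short chain-rule computation above.
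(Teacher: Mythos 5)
Your proposal is correct and follows essentially the same route as the paper: part (a) is the standard telescoping bounded-distortion estimate for $C^2$ iterates near a neutral fixed point (which the paper simply cites from the proof of Lemma~5.3 in \cite{JT}), and part (b) is obtained, as in the paper, by the mean value theorem applied over $[\phi_j(1),1]$ together with (a), using the open set condition to place $\phi_i(x)$ in that slab. Your extra step of bounding $|\phi'_j|$ below at the single iterate (because you apply the MVT at generation $n-1$ rather than $n$) is a harmless variant of the same computation, and your reading of (a) as a log-Lipschitz distortion bound matches how the paper actually uses it.
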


\if0\begin{lem}[in the proof of {\cite[Lemma~5.3]{JT}}]
\label{scope'}
Let $f\colon[0,1)\to \mathbb R$ be a $C^{2}$ map satisfying
$f(0)=0$, $f'(0)=1$ and $f'(x)>1$ for all $x\in(0,1)$. There exists a constant $C>0$ such that
for every $n\in\mathbb N$ and all $x,y\in J_{n-1}$,
\[\log\frac{(f^n)'(x)}{(f^n)'(y)}\leq 
C|f^n(x)-f^n(y)|\sum_{i=0}^{n-1}\frac{|J_{i}|}{|J_0|},\]
where 
$q_0=1$, $f(q_{i+1})=q_{i}$ and $J_i=[q_{i+1},q_i)$ for $i=0,\ldots, n-1$.
\end{lem}
\fi

\begin{proof}Since the number of parabolic indices is finite, (a) follows from 
a standard bounded distortion estimate for iterations of a $C^2$ map near its neutral fixed point, see e.g.,
the proof of \cite[Lemma~5.3]{JT}.

By the mean value theorem, for any  $n\in\mathbb N_{\geq2}$ there exists $y\in [\phi_{j^n}(1),\phi_{j^{n-1}}(1)]$ such that
\begin{equation}\label{dist-loc-eq}|(\phi_{j^n}^{-1})'(y)||\phi_{j^n}(1)-\phi_{j^{n-1}}(1)|=1-\phi_{j}(1)>0.\end{equation}
We have
$\phi_{j^n}^{-1}(y)\in[\phi_{j}(1),1]$,
and 
$\phi_{i}(x)\in[\phi_{j}(1),1]$
for all $i\in \mathcal I\setminus\{j\}$ and all $x\in[0,1]$.
By (a) and 
\eqref{dist-loc-eq}, we have 
\[|\phi'_{j^{n}}(\phi_i(x))|\geq e^{-K_0}
|\phi'_{j^{n}}(\phi_{j^n}^{-1}(y))|=\frac{e^{-K_0}}{1-\phi_{j}(1)}|\phi_{j^n}(1)-\phi_{j^{n-1}}(1)|.\]
Put $K_1=\inf_j( e^{K_0}/(1-\phi_{j}(1)))$ where the infimum is taken over all parabolic indices of $\Phi$.
Then (b) holds.
\end{proof}

\if0
\begin{lem}\label{dist}(to be removed. A2 suffices)
Let $\Phi=(\phi_i)_{i\in I}$ be a parabolic IFS on $[0,1]$. There exists $K_0>0$ such that 
if $n\in\mathbb N$, $(a_1,\ldots,a_n)\in\mathbb N^n$ and $a_n$ is not a parabolic index, then
\[\frac{|\phi'_{a_1\cdots a_n}(y)|}{|\phi'_{a_1\cdots a_n}(x)|}\leq 
K_0\ \text{ for all }x,y\in[0,1].\]
\end{lem}
\begin{lem}[{cf. \cite[Lemma~5.3]{JT}}]
\label{scope}
Let $r>0$ and let $f\colon[0,r)\to \mathbb R$ be a $C^{2}$ map satisfying $f0=0$, $f'0=1$ and $f'x>1$ for all $x\in(0,r)$. There exists a constant $K>0$ such that for every $n\in\mathbb N$ and any pair $x,y$ of points in $J_{n-1}$,
\[\log\frac{|(f^n)'y|}{|(f^n)'x|}\leq 
K|f^nx-f^ny|\sum_{i=0}^{n-1}\frac{|J_{i}|}{|J_0|},\]
where 
$q_0=r$, $fq_{i+1}=q_{i}$ and $J_i=[q_{i+1},q_i)$ for $i=0,\ldots, n-1$.
\end{lem}\fi

\section{On the proofs of the main results}
In this section we complete the proofs of the main results.
In Section~\ref{BCF-IFS} we begin by introducing an infinite parabolic IFS generating the BCF.
In Section~\ref{pfthma} we complete the proof of Theorem~\ref{thma}. In Section~\ref{pfthmb} we complete the proof of Theorem~\ref{thmb}.

\subsection{Parabolic IFS generating the BCF}\label{BCF-IFS}
Consider the IFS $\Psi=\{\psi_i\}_{i\in \mathbb N_{\geq2}}$ on $[0,1]$ given by \[\psi_i(x)=1-\frac{1}{x+i-1}.\] 
We claim that $\Psi$ is a parabolic IFS.
Indeed, $2\in\mathbb N_{\geq2}$ is the only parabolic index: $\psi_2(0)=0$ and $\psi'_2(0)=1$. 
We have
\[\Lambda(\Psi)=\Lambda'(\Psi)=\{0\}\cup((0,1)\setminus\mathbb Q).\]
 Since $0=\lim_{n\to\infty}\psi_2^n(0)$,
 $0$ is contained in the limit set $\Lambda(\Psi)$.
A direct calculation shows (A1). Condition (A2) follows from (A1), Lemma~\ref{dist-loc}(a) and the finiteness of 
$\sup_{i\geq2}\max_{x\in[0,1]}|(\log| \psi'_{i}(x)|)'|$. Hence the claim holds.

For all $x\in \Lambda(\Psi)$ we have
 \[x=\lim_{n\to\infty}\psi_{b_1(x)}\cdots\psi_{b_n(x)}(0)=1-\confrac{1 }{b_{1}(x)} - \confrac{1 }{b_{2}(x)} -\confrac{1 }{b_{3}(x)} -\cdots.\] 
 In other words,
 a fundamental interval $I(b_1,\ldots,b_n)$ of order $n$ for the IFS $\Psi$ is the closure of the set of $x\in[0,1]$ that have the finite or infinite BCF expansion beginning $b_1,\ldots,b_n$.



 \subsection{Proof of Theorem~\ref{thma}}\label{pfthma}
Let $\mathcal B$ be a finite subset of $\mathbb N_{\geq2}$
with $\#\mathcal B\geq2$ and $2\in \mathcal B$. 
 It suffices to show that for any $\varepsilon>0$
there exists $\alpha\in(2,\infty)$ such that \begin{equation}\label{target}\dim_{\rm H}(G(\alpha)\cap F_{\mathcal B})>\frac{1}{1+\varepsilon}\left(\dim_{\rm H}F_{\mathcal B}-\varepsilon\right).\end{equation}
As explained in the introduction, a proof of this inequality consists of three steps.\medskip

\noindent{\it Step~1: Construction of a seed set.}
Consider the finite IFS
$\Psi_{\mathcal B}=\{\psi_i\}_{i\in \mathcal B }$
that is a subsystem of $\Psi$ in Section~\ref{BCF-IFS}. Since $2\in\mathcal B$, $\Psi_{\mathcal B}$ is a parabolic IFS and satisfies
$\Lambda(\Psi_{\mathcal B})=\{0\}\cup F_{\mathcal B}.$ By Proposition~\ref{sat-prop},
$\Psi_{\mathcal B}$ is saturated and has mild distortion. Let $\varepsilon>0$. 
By Proposition~\ref{katok} applied to  $\Psi_{\mathcal B}$,
 there exist an integer $p\geq2$, a constant $\gamma>0$ and 
a non-empty finite set $W^{(p)}\subset {\mathcal B}^{p}$ with the following properties:

\begin{itemize}

\item[(A)] for all distinct elements $\omega,\eta\in W^{(p)}$, $I(\omega)\cap I(\eta)=\emptyset;$

\item[(B)] for any $\omega=\omega_1\cdots \omega_{p}\in  W^{(p)}$, $\omega_p\in \mathcal B$ is not a parabolic index;

\item[(C)] for any $\omega\in W^{(p)}$, $\max_{x\in[0,1]}|\psi'_{\omega}(x)|<\exp(-\gamma p)$;



\item[(D)]
the finite IFS
$\Psi_{\mathcal B}^{(p)}=\{\psi_{\omega} \}_{\omega\in 
W^{(p)} }$ on $[0,1]$ has no parabolic indices and satisfies
\begin{equation}\label{include}\Lambda(\Psi_{\mathcal B}^{(p)})\subset F_{\mathcal B}\subset F_{\mathcal B}\cup\{0\}=\Lambda(\Psi_{\mathcal B})\end{equation} and 
\begin{equation}\label{dim-ineq}\dim_{\rm H}\Lambda(\Psi_{\mathcal B }^{(p)})>\dim_{\rm H}F_{\mathcal B}-\varepsilon.\end{equation}
\end{itemize}
The set $\Lambda(\Psi_{\mathcal B }^{(p)})$ is our seed set.\medskip

\noindent{\it Step~2: Modification of the seed set.}
We construct a subset of $G(\alpha)\cap F_{\mathcal B}$ by modifying the seed set $\Lambda(\Psi_{\mathcal B}^{(p)})$.
Fix \[t\in \mathcal B\setminus\{2\}.\]
Fix a sequence
$(m(i))_{i=0}^\infty$ of positive integers such that
\begin{equation}\label{M-eq}M_k\geq\max\left\{\sum_{i=0}^{k-1}M_i,\frac{M_{k-1}}{2}\right\}\ \text{ for all }k\geq1,\end{equation}
where
\begin{equation}\label{Mk-def}M_k=p\sum_{i=0}^k m(i).\end{equation}
 We put 
 \[\lambda=\frac{\varepsilon\gamma}{5}\]
 and
\[L=\max\{\max \mathcal B-2,p\},\]
 and fix $\alpha\in(2,\infty)$ such that
\begin{equation}\label{eq-c}2(\alpha-2)\log(\sqrt{2}(L+1))+(\alpha-2)\lambda\leq \frac{\lambda}{2}.\end{equation}
Let $y\in \Lambda(\Psi_{\mathcal B}^{(p)})$.
Then $(b_n(y))_{n\in\mathbb N}$ is a concatenation of elements of $W^{(p)}$.
At each position $M_k$, $k\geq0$
of $(b_n(y))_{n\in\mathbb N}$
we insert a block of $2$ of length $\lfloor\exp(\lambda M_{k})\rfloor$ and a single $t$ to define a new sequence 
\begin{equation}\label{ins}\begin{split}&\ldots,b_{M_{k-1}-1}(y),b_{M_{k-1} }(y),\!\!\!\!\!\!\underbrace{2,\ldots,2,}_{
 \lfloor\exp(\lambda M_{k-1} )\rfloor
 \text{-times}}\!\!\!\!\!\!t,b_{M_{k-1}+1}(y),\ldots\\
  &\ldots,b_{M_k-1}(y),b_{M_k}(y),\!\!\!\!\underbrace{2,\ldots,2,}_{
  \lfloor\exp(\lambda M_k)\rfloor\text{-times}}\!\!\!\!t,b_{M_k+1}(y),\ldots\\
   &\ldots,b_{M_{k+1} }(y),\!\!\!\!\!\underbrace{2,\ldots,2,}_{
  \lfloor\exp(\lambda M_{k+1})\rfloor\text{-times}}\!\!\!\!\!t,b_{M_{k+1}+1}(y),\ldots\end{split}\end{equation}
  Let $x(y)$ denote the point in $(0,1)\setminus\mathbb Q$ whose BCF expansion is given by this new sequence. 
  Since $(b_n(y))_{n\in\mathbb N}$ may contain $2$,  the inserted blocks of $2$ may not exhaust all the $2$ in $(b_n(x(y)))_{n\in\mathbb N}$.
Let $G^{(p)}(\alpha)$ denote the collection of these points:
\[G^{(p)}(\alpha)=\{x(y)\in (0,1)\setminus\mathbb Q\colon y\in \Lambda(\Psi_{\mathcal B}^{(p)})\}.\]

\begin{lem} \label{subset-lem}
We have $G^{(p)}(\alpha)\subset G(\alpha)\cap F_{\mathcal B}$.
\end{lem}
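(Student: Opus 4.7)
The plan is to verify the two containments $x(y) \in F_{\mathcal B}$ and $x(y) \in G(\alpha)$ separately. The first is essentially immediate: by \eqref{include}, $y \in \Lambda(\Psi_{\mathcal B}^{(p)}) \subset F_{\mathcal B}$ so all $b_n(y) \in \mathcal B$, and the only other digits occurring in the BCF expansion of $x(y)$ are the inserted $2$s and $t$s, which also lie in $\mathcal B$. To handle $x(y) \in G(\alpha)$ I would invoke Lemma~\ref{identify} and, for each sufficiently large $k$, exhibit an RCF index $n(k)$ satisfying $a_{n(k)+1}(x(y)) > q_{n(k)}(x(y))^{\alpha - 2}$.

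The choice of $n(k)$ uses the BCF-to-RCF transformation recalled in Section~\ref{trans-sec}: a maximal BCF block of $2$s of length $\ell$ collapses to a single RCF partial quotient equal to $\ell + 1$. By property (B), each word in $W^{(p)}$ ends with a non-$2$ digit, so maximal blocks of $2$s inside $(b_n(y))_n$ have length at most $p - 1$. The $\lfloor \exp(\lambda M_k) \rfloor$ inserted $2$s at position $M_k$ are bounded on the right by $t \geq 3$, so, after absorbing at most $p - 1$ adjacent $2$s from $y$'s BCF on the left, they form a maximal $2$-block of $(b_n(x(y)))_n$ of length between $\lfloor \exp(\lambda M_k) \rfloor$ and $\lfloor \exp(\lambda M_k) \rfloor + p - 1$. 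Take $n(k)$ so that $a_{n(k)+1}(x(y))$ is the RCF partial quotient produced by this block; then $a_{n(k)+1}(x(y)) \geq \exp(\lambda M_k)$.

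To bound $q_{n(k)}(x(y))$ from above I would use Lemma~\ref{qn-up}. The crucial counting observation is that each of the $k$ earlier inserted blocks, however long, contributes only a single non-$2$ BCF entry (its $t$) and hence only one RCF pair; therefore the number of RCF pairs up to index $n(k)$ is at most the number of non-$2$ entries in $(b_n(y))_{n=1}^{M_k}$ plus $k$, giving $n(k) \leq 2(M_k + k) = O(M_k)$ (note that \eqref{M-eq} forces $M_k$ to grow at least geometrically in $k$). Splitting the product in Lemma~\ref{qn-up} into the $k$ large factors from previous insertions (each at most $\exp(\lambda M_j) + p + 1$) and the remaining small factors (each at most $L + 1$), and using \eqref{M-eq} in the form $\sum_{j<k} M_j \leq M_k$, one obtains
\[
\log q_{n(k)}(x(y)) \leq 2 M_k \log(\sqrt{2}(L + 1)) + \lambda M_k + o(M_k).
\]
Multiplying by $\alpha - 2$ and applying \eqref{eq-c} yields $(\alpha - 2)\log q_{n(k)}(x(y)) \leq \tfrac{\lambda}{2} M_k + o(M_k)$, which for all sufficiently large $k$ is strictly less than $\log a_{n(k)+1}(x(y)) \geq \lambda M_k$. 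Lemma~\ref{identify} then concludes $x(y) \in G(\alpha)$.

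The main obstacle, I expect, is the counting step in the previous paragraph: it is tempting but wrong to bound $n(k)$ by the total BCF length consumed up to index $n(k)$, which would grow like $\sum_{j<k} \exp(\lambda M_j)$ and ruin the estimate. The correct viewpoint is that $q_n$ depends on the number and sizes of RCF partial quotients, not on the underlying BCF length, and the $j$-th inserted $2$-block contributes exactly one large RCF quotient rather than many small ones. A secondary technical point is the possibility that $b_{M_k}(y) = 2$: by (B) this can extend the inserted block by at most $p - 1$ additional $2$s from $y$'s BCF, which only perturbs the resulting large RCF partial quotient by an additive constant and thus does not affect any asymptotic bound above.
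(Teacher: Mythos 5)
Your proof is correct and follows essentially the same route as the paper: the inclusion in $F_{\mathcal B}$ is immediate from \eqref{include} and $2,t\in\mathcal B$, the inserted $2$-block is converted via the BCF-to-RCF transformation into a single partial quotient $a_{n+1}>\exp(\lambda M_k)$, the denominator $q_n$ is bounded through Lemma~\ref{qn-up} by separating the $k$ large factors coming from earlier insertions from the remaining factors bounded by $L+1$, and the conclusion follows from \eqref{eq-c} and Lemma~\ref{identify}. Your count $n\le 2(M_k+k)$ (each non-$2$ BCF digit produces an RCF pair) is in fact the more careful version of the paper's $n+1\le M_k+k\le 2M_k+1$, and since \eqref{M-eq} forces $k=o(M_k)$ it feeds into \eqref{eq-c} exactly as in the paper; note also that your secondary worry about $b_{M_k}(y)=2$ cannot occur, since by (B) the last letter of every word in $W^{(p)}$ is not the parabolic index.
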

\begin{proof}By \eqref{include} and $2\in\mathcal B$,  $G^{(p)}(\alpha)\subset F_{\mathcal B}$ holds.
Let $y\in \Lambda(\Psi_{\mathcal B}^{(p)})$.
By the transformation formula from the BCF to RCF in Section~\ref{trans-sec}, 
for each $k\in\mathbb N$
the block of $2$ of length 
$\lfloor\exp(\lambda M_k)\rfloor$ in the BCF expansion of $x(y)$ is transformed into the single RCF partial quotient of $x(y)$ at a position, say $n+1$. 

Recall that the sequence of BCF partial quotients of $y$ is a concatenation of elements of $W^{(p)}$. 
By (B), the length of a block of $2$ contained in each element of $W^{(p)}$ does not exceed $p-1$. Hence,
each element of $W^{(p)}$ in the BCF expansion of $x(y)$ inherited from $y$
is transformed into a word from $\{1,\ldots,L\}$ of length not exceeding $p$.
Since $t\in\mathcal B\setminus\{2\}$, 
 if $1\leq i\leq n$ and $a_i(x(y))$ does not correspond to the inserted long block of $2$, then $a_i(x(y))\leq L$. 
Taking contributions from the inserted long blocks of $2$ together with this inequality, and using
 Lemma~\ref{qn-up},
\begin{equation}\label{subset-eq2}q_n(x(y))\leq (L+1)^n\prod_{i=0}^{k-1}\left(\lfloor\exp(\lambda M_i)\rfloor+1\right)\leq (L+1)^n2^k\exp\left(\lambda\sum_{i=0}^{k-1} M_i\right).\end{equation}
Since $k\leq M_k$ we have
\begin{equation}\label{subset-eq1}n+1\leq M_k+k\leq 2M_k+1.\end{equation}
Using \eqref{subset-eq2}, \eqref{subset-eq1}, $k\leq M_k$ and the first inequality in \eqref{M-eq}, \eqref{eq-c} we have
\[\begin{split}q_n^{\alpha-2}(x(y))&\leq (L+1)^{(\alpha-2)n}2^{(\alpha-2)k}\exp\left((\alpha-2)\lambda\sum_{i=0}^{k-1} M_i\right)\\
&\leq (\sqrt{2}(L+1))^{2(\alpha-2)M_k}\exp\left((\alpha-2)\lambda M_k\right)\\
&\leq \exp\left(\frac{\lambda M_k}{2}\right)\\
&<\lfloor\exp(\lambda M_k)\rfloor=a_{n+1}(x(y)).\end{split}\]
The last inequality holds for all sufficiently large $k$.
Lemma~\ref{identify} gives $x(y)\in G(\alpha)$. Since
$y\in \Lambda(\Psi_{\mathcal B}^{(p)})$ is arbitrary we obtain $G^{(p)}(\alpha)\subset G(\alpha)$.
\end{proof}

 \begin{rem}\label{effect-rem}
We have chosen $(m(i))_{i=0}^\infty$, $\lambda$, $\alpha$ in such a way that the effect of the block of $2$ inserted at position $M_k$ as in \eqref{ins} is `almost absorbed' into the contribution from the partial quotients at positions $M_{k-1}+1,\ldots,M_k$. For details, see the proof of Lemma~\ref{compare} in Step~3.\end{rem}

\noindent{\it Step~3: Estimate of the Hausdorff dimension of the modified set.}
The map $y\in \Lambda(\Psi_{\mathcal B}^{(p)})\mapsto x(y)\in G^{(p)}(\alpha)$ is bijective.
Let $f_\varepsilon\colon G^{(p)}(\alpha)\to \Lambda(\Psi_{\mathcal B}^{(p)})$ denote the inverse of this map, which eliminates all the inserted blocks of $2$ and all the inserted single $t$.

\begin{prop}\label{lip-prop}
 There exists $K>0$ such that
\[|f_\varepsilon(x)-f_\varepsilon(y)|\le K|x-y|^{\frac{1}{1+\varepsilon}}\ \text{ for all } x, y\in G^{(p)}(\alpha).\]\end{prop}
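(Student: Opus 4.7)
I would prove this by carefully comparing the diameters of fundamental intervals. Given $x, y \in G^{(p)}(\alpha)$, set $\tilde x = f_\varepsilon(x)$, $\tilde y = f_\varepsilon(y)$, and let $\ell \geq 0$ be the largest integer for which the $\Psi_{\mathcal B}^{(p)}$-codings of $\tilde x, \tilde y$ agree on entries $1, \ldots, \ell$. By property (A) of Proposition~\ref{katok} and finiteness of $W^{(p)}$, the order-$1$ sub-fundamental intervals $\psi_{\omega_{\ell+1}(\tilde x)}([0,1])$ and $\psi_{\omega_{\ell+1}(\tilde y)}([0,1])$ are separated by a uniform distance $\delta > 0$. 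Let $k^* = \max\{k \geq 0 \colon M_k \leq p\ell\}$ and $N' = p\ell + \sum_{i=0}^{k^*}(\lfloor\exp(\lambda M_i)\rfloor + 1)$. Tracing the insertion rule \eqref{ins}, the BCF expansions of $x$ and $y$ coincide on positions $1, \ldots, N'$ while their next $p$ digits form the distinct $W^{(p)}$-words $\omega_{\ell+1}(\tilde x)$ and $\omega_{\ell+1}(\tilde y)$.

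For the upper bound, $|\tilde x - \tilde y| \leq |\tilde J|$ where $\tilde J$ is the common order-$\ell$ fundamental interval of $\Psi_{\mathcal B}^{(p)}$; by property (C), $|\tilde J| \leq \exp(-\gamma p\ell)$. For the lower bound on $|x - y|$, writing $x = \psi_{b_1(x)\cdots b_{N'}(x)}(x_1)$ and $y = \psi_{b_1(x)\cdots b_{N'}(x)}(y_1)$ with $|x_1 - y_1| \geq \delta$, the mean value theorem yields $|x - y| \geq \delta \min_{[0,1]}|\psi'_{b_1(x)\cdots b_{N'}(x)}|$. Under the chain rule, the composition splits into alternating \emph{original} pieces (concatenations of $W^{(p)}$-words) and \emph{inserted} pieces $\psi_{2^{n_i}} \circ \psi_t$ with $n_i = \lfloor\exp(\lambda M_i)\rfloor$. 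The bounded distortion (A2) together with property (B) that each $W^{(p)}$-word ends at a non-parabolic index makes the product of derivative infima over original pieces comparable to $|\tilde J|$. For the inserted pieces, Lemma~\ref{dist-loc}(b) with $j=2$ and $i=t$ gives
\[|\psi'_{2^{n_i}}(\psi_t(\xi))| \geq K_1 \, |\psi_{2^{n_i}}(1) - \psi_{2^{n_i-1}}(1)| \asymp n_i^{-2} \asymp \exp(-2\lambda M_i),\]
so the cumulative insertion factor is at least $c^{k^*+1}\exp(-2\lambda \sum_{i=0}^{k^*} M_i)$. The first inequality in \eqref{M-eq} yields $\sum_{i=0}^{k^*} M_i \leq 2 M_{k^*} \leq 2p\ell$ and forces $(M_k)$ to grow at least geometrically, so $k^* = O(\log M_{k^*})$ and for $\ell$ large the prefactor $c^{k^*+1}$ is absorbed into $\exp(-\lambda M_{k^*})$, giving $|x - y| \geq c'|\tilde J|\exp(-5\lambda p\ell)$.

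With $\lambda = \varepsilon\gamma/5$ this becomes $\exp(-5\lambda p\ell) = \exp(-\varepsilon\gamma p\ell) \geq |\tilde J|^\varepsilon$, hence $|x - y| \geq c'|\tilde J|^{1+\varepsilon} \geq c'|\tilde x - \tilde y|^{1+\varepsilon}$, which is the desired Hölder estimate with $K = (c')^{-1/(1+\varepsilon)}$; the finitely many small-$\ell$ cases are absorbed into $K$ since the estimate is trivial when $\ell = 0$ and involves only bounded quantities. The main obstacle is the chain-rule bound on the inserted pieces: each insertion pushes the argument of $\psi_{2^{n_i}}$ arbitrarily close to the parabolic fixed point $0$ where uniform bounded distortion fails, so one must invoke the parabolic-specific lower bound of Lemma~\ref{dist-loc}(b); the calibration $\lambda = \varepsilon\gamma/5$ is precisely what guarantees that the aggregated insertion loss is dominated by a single $|\tilde J|^\varepsilon$ factor.
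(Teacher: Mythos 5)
Your argument is correct and follows essentially the same route as the paper's: the paper packages the same chain-rule comparison of $|I(b_1,\ldots,b_n)|$ against $|\overline{I}(b_1,\ldots,b_n)|^{1+\varepsilon}$ as Lemma~\ref{compare} and the separation estimate as Lemma~\ref{separate}, using the identical ingredients --- (A2), (C), Lemma~\ref{dist-loc}(b) with $\psi_2^n(1)=1/(n+1)$, and the calibration $\lambda=\varepsilon\gamma/5$. The only cosmetic difference is that you absorb the accumulated distortion constants $c^{k^*+1}$ via the geometric growth of $(M_k)$ forced by \eqref{M-eq}, whereas the paper cancels them factor-by-factor against the inserted-block terms as in \eqref{back-eq5}.
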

We finish the proof of Theorem~\ref{thma} assuming Proposition~\ref{lip-prop}. Combining Lemma~\ref{subset-lem}, Proposition~\ref{lip-prop}, \cite[Proposition~3.3]{Fal14} and \eqref{dim-ineq} we obtain
\[\dim_{\rm H}(G(\alpha)\cap F_{\mathcal B})\geq\dim_{\rm H} G^{(p)}(\alpha)\geq\frac{1}{1+\varepsilon}\dim_{\rm H}\Lambda(\Psi_{\mathcal B }^{(p)})>\frac{1}{1+\varepsilon}\left(\dim_{\rm H}F_{\mathcal B}-\varepsilon\right),\]
which verifies
\eqref{target}.

It is left to prove Proposition~\ref{lip-prop}.
Define a sequence $(n(k))_{k=0}^\infty$ of non-negative integers by $n(0)=0$, and
 \begin{equation}\label{n-ineq}n(k)=n(k-1)+m(k-1)p+\lfloor \exp(\lambda M_{k-1} )\rfloor+1\end{equation}
 for $k\geq1$. For each $k\geq1$,
  the position $n(k)$ in the sequence of BCF partial quotients of $x(y)$ is right after the end of 
the block of $2$ of length 
 $\lfloor \exp(\lambda M_{k-1} )\rfloor$. For each $k\in\mathbb N\cup\{0\}$,
 let $A_{k}$ denote the set of $b_1\cdots b_{n(k+1)}\in {\mathcal B}^{n(k+1)}$ for which the following conditions hold for all  $0\leq j\leq k$:
\begin{itemize}
\item[(I)] $b_{n(j)+1}\cdots b_{n(j)+m(j)p}\in {\mathcal B}^{m(j)p}$ is a concatenation of elements of $W^{(p)}$;
\item[(II)] $b_i=2$ for $n(j)+m(j)p+1\leq i< n(j+1)$;
\item[(III)] $b_{n(j+1)}=t$.
\end{itemize}
Notice that
\[G^{(p)}(\alpha)=\bigcap_{k=0}^\infty\bigcap_{\omega\in A_k}\psi_{\omega}([0,1]).\]
We set
\[B_k=\{
n(k)+mp\colon m=0,1,\ldots,m(k)\}.\]
Notice that \eqref{n-ineq} implies $\max B_k<n(k+1)$.
\begin{lem}\label{not2}
If $x\in G^{(p)}(\alpha)$, $k\in\mathbb N$ and $n\in B_k$,
then $b_n(x)\neq2$.
\end{lem}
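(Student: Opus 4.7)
The plan is to split $B_k$ into two types of positions and show each type avoids the value $2$ by construction. Recall $B_k = \{n(k)+mp : m=0,1,\ldots,m(k)\}$, so either $m=0$ (giving $n=n(k)$) or $1\leq m\leq m(k)$ (giving a position strictly inside the segment inherited from $y$).

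For the case $n = n(k)$ with $k\geq 1$, I would appeal directly to condition (III) in the definition of $A_{k-1}$: position $n(k)$ is precisely where the single $t$ was inserted at the end of the $k$-th long block of $2$s in the construction \eqref{ins}. Since $t\in\mathcal B\setminus\{2\}$, we have $b_{n(k)}(x) = t \neq 2$.

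For the remaining positions $n = n(k) + mp$ with $1\leq m\leq m(k)$, I would use condition (I): the word $b_{n(k)+1}(x)\cdots b_{n(k)+m(k)p}(x)$ is a concatenation of $m(k)$ elements of $W^{(p)}$, so the length-$p$ subword $b_{n(k)+(m-1)p+1}(x)\cdots b_{n(k)+mp}(x)$ lies in $W^{(p)}$. By property (B) from Proposition~\ref{katok}, the last coordinate of any element of $W^{(p)}$ is not a parabolic index of $\Psi_{\mathcal B}$. Since $2$ is the unique parabolic index of $\Psi$ (and hence of the subsystem $\Psi_{\mathcal B}$, as verified in Section~\ref{BCF-IFS}), this forces $b_{n(k)+mp}(x) \neq 2$.

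There is no real obstacle: the statement is essentially a bookkeeping check that the positions $n(k)+mp$ have been placed exactly at the junction points where the construction forbids a $2$ from appearing. The only subtlety worth flagging is the alignment of the inherited segment with the block structure of $y$: because $M_{k-1}$ is a multiple of $p$, the $m(k)p$ consecutive partial quotients $b_{M_{k-1}+1}(y),\ldots,b_{M_k}(y)$ split cleanly into $m(k)$ full elements of $W^{(p)}$, which is exactly what makes (B) applicable to each $b_{n(k)+mp}(x)$.
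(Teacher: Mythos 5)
Your proof is correct and follows the same route as the paper: condition (III) handles $n=n(k)$ (since $t\neq 2$), while (I) together with (B) handles $n=n(k)+mp$ for $m\geq 1$, because $2$ is the unique parabolic index of $\Psi_{\mathcal B}$ and cannot be the last letter of a word in $W^{(p)}$. The paper's own proof is a two-line version of exactly this argument.
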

\begin{proof}If $n\in B_k$ and $n\neq n(k)$, then (B) and (I) together imply $b_n(x)\neq2$. Condition (III) gives $b_{n(k)}(x)=t\neq2$.\end{proof}

For each $n\in \mathbb N$ and $b_1\cdots b_{n}\in {\mathcal B}^n$, let $\overline{b_1\cdots b_n}$ denote the word from ${\mathcal B}$ 
obtained by eliminating from $b_1\cdots b_n$ the digits $b_i$,
$n(j)+m(j)p+1\leq i\leq n(j+1)$ for $0\leq j\leq k$, where
$n(k)< n\leq n(k+1)$. 
Set
\[\overline{I}(b_1,\ldots,b_n)=\psi_{\overline{b_1\cdots b_n}}([0,1]).\]
Let $|\cdot|$ denote the Euclidean diameter of sets in $[0,1]$. 
\begin{lem}\label{compare}
There exists $\underbar{\it k}\in\mathbb N$ such that for any integer $k\geq \underbar{\it k}$,
 any $b_1\cdots b_{n(k+1)}\in A_{k}$ and any
$n\in B_k$
we have
\[|I(b_1,\ldots,b_n)|\geq|\overline{I}(b_1,\ldots,b_n)|^{1+\varepsilon}.\]
\end{lem}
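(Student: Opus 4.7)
The plan is to take logarithms and show that the ``extra contraction'' caused by the inserted blocks of $2$'s and single $t$'s is at most $\varepsilon$ times the contraction recorded in $\overline{I}(b_1,\ldots,b_n)$; the choice $\lambda=\varepsilon\gamma/5$ in Step~2 is calibrated precisely to make this comparison close. Write $n=n(k)+mp$ with $0\leq m\leq m(k)$, and decompose $\omega:=b_1\cdots b_n=\sigma_0\tau_0 t\,\sigma_1\tau_1 t\cdots\sigma_{k-1}\tau_{k-1} t\,\sigma'_k$, where each $\sigma_j$ is a concatenation of $m(j)$ elements of $W^{(p)}$, $\tau_j=2^{N_j}$ with $N_j:=\lfloor\exp(\lambda M_j)\rfloor$, and $\sigma'_k$ is a concatenation of $m$ elements of $W^{(p)}$ (empty if $m=0$); then $\overline{b_1\cdots b_n}=\sigma_0\sigma_1\cdots\sigma_{k-1}\sigma'_k$. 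Since the last symbol of each of these two words is non-parabolic---either $t$, or by (B) the end of a $W^{(p)}$-word---the mean value theorem combined with assumption (A2) for $\Psi$ gives $|I(b_1,\ldots,b_n)|\asymp|\psi'_\omega(1)|$ and $|\overline{I}(b_1,\ldots,b_n)|\asymp|\psi'_{\overline{b_1\cdots b_n}}(1)|$ with constants independent of $k,m,n$.

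Applying the chain rule and using standard bounded distortion of the finite, non-parabolic IFS $\Psi_{\mathcal B}^{(p)}$ (in which each $\sigma_j$ is a composition) to align the evaluation points of the $\sigma_j$-factors across the two products at a cost of $O(1)$ per block, I obtain
\[
-\log|\psi'_\omega(1)|\leq -\log|\psi'_{\overline{b_1\cdots b_n}}(1)|+\sum_{j=0}^{k-1}\bigl(-\log|\psi'_{\tau_j t}(z_j)|\bigr)+O(k)
\]
for certain points $z_j\in[0,1]$. Since $t\neq 2$ we have $\psi_t(z_j)\in[\psi_2(1),1]$, so combining Lemma~\ref{dist-loc}(a) with the explicit formula $\psi_{2^n}(1)=1/(n+1)$ yields $|\psi'_{2^{N_j}}(\psi_t(z_j))|\asymp N_j^{-2}$, giving $-\log|\psi'_{\tau_j t}(z_j)|\leq 2\lambda M_j+C_0$. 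On the other hand, condition (C) applied pointwise to each of the $\sum_{j\leq k-1}m(j)+m$ $W^{(p)}$-words in $\overline{b_1\cdots b_n}$ gives $|\psi'_{\overline{b_1\cdots b_n}}(1)|<\exp(-\gamma(M_{k-1}+mp))$, that is, $-\log|\psi'_{\overline{b_1\cdots b_n}}(1)|\geq\gamma(M_{k-1}+mp)$.

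Combining these bounds and using the first inequality in \eqref{M-eq} in the form $\sum_{j=0}^{k-1}M_j\leq 2M_{k-1}$, the inequality $|I|\geq|\overline{I}|^{1+\varepsilon}$ reduces---modulo constants absorbed into the $\asymp$ relations---to
\[
4\lambda M_{k-1}+O(k)\leq\varepsilon\gamma(M_{k-1}+mp).
\]
With $\lambda=\varepsilon\gamma/5$, the leading term on the left equals $(4\varepsilon\gamma/5)M_{k-1}$, leaving a margin $(\varepsilon\gamma/5)M_{k-1}$ on the right. Since \eqref{M-eq} also forces $M_k\geq 2M_{k-1}$ and hence $M_{k-1}$ grows exponentially in $k$, this margin easily absorbs the $O(k)+O(1)$ remainder for all $k\geq\underbar{\it k}$, uniformly in $m\in\{0,\ldots,m(k)\}$. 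The hard part will be the careful bookkeeping of the distortion constants so that every error term stays below $(\varepsilon\gamma/5)M_{k-1}$ uniformly in both $m$ and $k$, and in particular that both endpoint cases $m=0$ (word ending at $t$) and $m\geq 1$ (word ending in a $W^{(p)}$-symbol) satisfy the non-parabolic endpoint hypothesis of (A2)---which is exactly why (B) and the insertion of $t$ at the end of each inserted block were built into the construction in Step~2.
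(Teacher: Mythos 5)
Your proof is correct and takes essentially the same route as the paper: the same block decomposition via the chain rule and mean value theorem, the same use of (A2) to align evaluation points at a cost of $O(1)$ per block, of Lemma~\ref{dist-loc} together with $\psi_2^n(1)=1/(n+1)$ to get $|(\psi_{2}^{N_j})'|\gtrsim e^{-2\lambda M_j}$, of (C) for the $W^{(p)}$-blocks, and the same calibration $\lambda=\varepsilon\gamma/5$. The only difference is bookkeeping---the paper cancels each inserted block against the $\varepsilon$-power of the immediately preceding $W^{(p)}$-blocks factor by factor, whereas you sum logarithms globally and use $\sum_{j=0}^{k-1}M_j\leq 2M_{k-1}$---and your side remark that \eqref{M-eq} forces $M_k\geq 2M_{k-1}$ is not actually justified by the first inequality there, but it is also not needed: the superlinear growth of $M_{k-1}$ that absorbs your $O(k)$ terms already follows from $M_k\geq\sum_{i=0}^{k-1}M_i$, which gives $M_k\geq 2^{k-1}M_0$.
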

\begin{proof}
 Since $I(b_1,\ldots,b_n)=\psi_{b_1\cdots b_n}([0,1])$ and $\overline{I}(b_1,\ldots,b_n)=\psi_{\overline{b_1\cdots b_n}}([0,1])$,
by the mean value theorem there exist $\theta,\overline{\theta}\in[0,1]$ such that  
\begin{equation}\label{un}|I(b_1,\ldots,b_n)|=|\psi_{b_1\cdots b_n}'(\theta)|\ \text{ and }\ |\overline{I}(b_1,\ldots,b_n)|=|\psi_{\overline{b_1\cdots b_n}}'(\overline{\theta})|.\end{equation}
Write
$n=n(k)+mp$, $0\leq m\leq m(k)$.
The chain rule gives
\begin{equation}\label{deux}\begin{split}|\psi_{b_1\cdots b_n}'(\theta)|=&
\prod_{i=0}^{k-1}|(\psi_{b_{n(i)+1}}\circ\cdots\circ\psi_{b_{n(i)+m(i)p} })'(\psi_{b_{n(i)+m(i)p+1}\cdots b_n }(\theta))|\\
&\times\prod_{i=0}^{k-1}|(\psi_{2}^{\lfloor \exp(\lambda M_{i})\rfloor})'(\psi_{b_{n(i+1)}\cdots b_n }(\theta))|
|\psi_t'(\psi_{b_{n(i+1)+1}\cdots b_n }(\theta))|\\
&\times
|\psi_{b_{n(k)}\cdots b_{n}}'(\theta)|\end{split}\end{equation}
and
\begin{equation}\label{trois}\begin{split}|\psi_{\overline{b_1\cdots b_n}}'(\overline\theta)|=&\prod_{i=0}^{k-1}|(\psi_{b_{n(i)+1}}\circ\cdots\circ\psi_{b_{n(i)+m(i)p }})'(\psi_{b_{n(i)+m(i)p+1}\cdots b_n }(\overline\theta))|\\
&\times
|\psi_{b_{n(k)}\cdots b_{n}}'(\overline\theta)|.\end{split}\end{equation}
Combining \eqref{un}, \eqref{deux}, \eqref{trois} we have
\[\begin{split}\frac{|\overline{I}(b_1,\ldots,b_n)|^{1+\varepsilon}}{|I(b_1,\ldots,b_n)| }=&\prod_{i=0}^{k-1}\frac{|(\psi_{b_{n(i)+1}}\circ\cdots\circ\psi_{b_{n(i)+m(i)p} })'(\psi_{b_{n(i)+m(i)p+1}\cdots b_n }(\overline\theta))|}{|(\psi_{b_{n(i)+1}}\circ\cdots\circ\psi_{b_{n(i)+m(i)p} })'(\psi_{b_{n(i)+m(i)p+1}\cdots b_n }(\theta))|}\\
&\times\prod_{i=0}^{k-1}\frac{|(\psi_{b_{n(i)+1}}\circ\cdots\circ\psi_{b_{n(i)+m(i)p} })'(\psi_{b_{n(i)+m(i)p+1}\cdots b_n }(\overline\theta))|^{\varepsilon}}{|(\psi_{2}^{\lfloor \exp(\lambda M_{i})\rfloor})'(\psi_{b_{n(i+1)}\cdots b_n }(\theta))|
|\psi_t'(\psi_{b_{n(i+1)+1}\cdots b_n }(\theta))|}\\
&\times\frac{|\psi_{b_{n(k)}\cdots b_{n}}'(\overline\theta)|}{|\psi_{b_{n(k)}\cdots b_{n}}'(\theta)|}|\psi_{b_{n(k)}\cdots b_{n}}'(\overline\theta)|^{\varepsilon}.\end{split}\]
In what follows
we estimate the fractions in the right-hand side from above one by one, and put together the estimates at the end.

By (A1) we have
\begin{equation}\label{back-eq1}|\psi_{b_{n(k)}\cdots b_{n}}'(\overline\theta)|^{\varepsilon}\leq1.\end{equation} 
Since
$b_{n(i)+m(i)p}\neq2$ 
 for $0\leq i\leq k-1$ and $b_n\neq 2$ by Lemma~\ref{not2}, (A2) gives
\begin{equation}\label{back-eq3}\frac{|(\psi_{b_{n(i)+1}}\circ\cdots\circ\psi_{b_{n(i)+m(i)p} })'(\psi_{b_{n(i)+m(i)p+1}\cdots b_n }(\overline\theta))|}{|(\psi_{b_{n(i)+1}}\circ\cdots\circ\psi_{b_{n(i)+m(i)p} })'(\psi_{b_{n(i)+m(i)p+1}\cdots b_n }(\theta))|}\leq C\end{equation}
for $0\leq i\leq k-1$ and
\begin{equation}\label{back-eq4}
\frac{|\psi_{b_{n(k)}\cdots b_{n}}'(\overline\theta)|}{|\psi_{b_{n(k)}\cdots b_{n}}'(\theta)|}\leq C.\end{equation}

The fraction in the second product is most delicate to treat, see Remark~\ref{effect-rem}.
Lemma~\ref{not2} gives
$b_{n(i+1)}=t\neq2$ for $0\leq i\leq k-1$, and a direct calculation gives
 $\psi_2^n(1)=1/(n+1)$ for $n\in\mathbb N$. By Lemma~\ref{dist-loc}(b), there exists a uniform constant $K_1>0$ such that
\begin{equation}\label{exp-eq}|(\psi_{2}^{\lfloor \exp(\lambda M_i)\rfloor})'(\psi_{b_{n(i+1)}\cdots b_n }(\theta))|\geq\frac{K_1}{
\exp(2\lambda M_i)
}\end{equation}
for $0\leq i\leq k-1$.
By the uniform contraction in (C) and the second inequality in \eqref{M-eq}, 
\begin{equation}\label{cont-eq}\begin{split}&|(\psi_{b_{n(i)+1}}\circ\cdots\circ\psi_{b_{n(i)+m(i)p} })'(\psi_{b_{n(i)+m(i)p+1}\cdots b_n }(\overline\theta))|\\&\leq \exp(-(M_i-M_{i-1})\gamma)\leq \exp\left(-\frac{M_i\gamma}{2}\right).\end{split}\end{equation}
By  \eqref{exp-eq} and \eqref{cont-eq} we have
\begin{equation}\label{back-eq5}
\begin{split}&\frac{|(\psi_{b_{n(i)+1}}\circ\cdots\circ\psi_{b_{n(i)+m(i)p} })'(\psi_{b_{n(i)+m(i)p+1}\cdots b_n }(\overline\theta))|^{\varepsilon}}{|(\psi_{2}^{\lfloor \exp(\lambda M_i)\rfloor})'(\psi_{b_{n(i+1)}\cdots b_n }(\theta))|
|\psi_t'(\psi_{b_{n(i+1)+1}\cdots b_n }(\theta))|}\\
&\leq\frac{
\exp(2\lambda M_i)
}{K_1\inf_{x\in[0,1]}|\psi_{t}'(x)|}\exp\left(-\frac{M_i\varepsilon\gamma}{2}\right)\leq\frac{1}{C^2}\end{split}\end{equation}
for $0\leq i\leq k-1$.
The last inequality holds for all sufficiently large $i$ by virtue of the choice of $\lambda$.
Combining \eqref{back-eq1}, \eqref{back-eq3}, \eqref{back-eq4}, \eqref{back-eq5} yields the desired inequality for all sufficiently large $k$.
\end{proof}

Put
 \begin{equation}\label{K-def}K_2=\min_{\substack{\omega,\eta\in W^{(p)}\\ \omega\neq \eta}}\min\{|x-y|\colon x\in I(\omega),\ y\in I(\eta)\}.\end{equation}
Fundamental intervals are closed subintervals of $[0,1]$, and all fundamental intervals of order $p$ corresponding to words in $W^{(p)}$ are pairwise disjoint by (A). Hence $K_2>0$ holds.
For a pair $(x,y)$ of distinct points in $G^{(p)}(\alpha)$, let $s(x,y)$ denote the maximal integer $n\geq0$
for which there exists a fundamental interval of order $n$ that contains $x$ and $y$.

\begin{lem}\label{separate}
For any pair $(x,y)$ of distinct points in $G^{(p)}(\alpha)$ satisfying $s(x,y)\geq p$, there exist integers $k\geq0$, $m\geq0$ such that \[n(k)+mp\leq s(x,y),\ 
n(k)+mp\in B_k\]
 and 
\[|I(b_1(x),\ldots,b_{n(k)+mp}(x))| \leq CK_2^{-1}|x-y|.\]\end{lem}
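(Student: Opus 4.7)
Set $s:=s(x,y)$ and choose $(k^*,m^*)$ so that $N:=n(k^*)+m^*p$ is the largest element of $\bigcup_{k\geq0}B_k$ with $N\leq s$. The hypothesis $s\geq p$, together with $p=n(0)+1\cdot p\in B_0$, guarantees that this maximum exists and satisfies $N\geq p$; this lower bound on $N$ is what will eventually enable the application of~(A2) with a single uniform constant.

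First I would pin down where the BCF expansions of $x$ and $y$ first disagree. By the definition of $G^{(p)}(\alpha)$, every such point carries the forced letter $2$ throughout each interval $[n(j)+m(j)p+1,n(j+1)-1]$ and the forced letter $t$ at each position $n(j+1)$, so the disagreement position $s+1$ cannot sit in any of these rigid zones and must lie inside some $W^{(p)}$-block $[n(j)+1,n(j)+m(j)p]$. Writing this position as $n(k^*)+m'p+r$ with $0\leq m'\leq m(k^*)-1$ and $1\leq r\leq p$, the maximality defining $N$ forces $m^*=m'$, so that $N$ lies immediately before the $W^{(p)}$-word within which $x$ and $y$ begin to differ.

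Next I would exploit the disjointness property~(A) to obtain a uniform separation of the two order-$(N+p)$ sub-fundamental intervals containing $x$ and $y$. Let $\omega_x,\omega_y\in W^{(p)}$ be the necessarily distinct $W^{(p)}$-words occupying positions $N+1,\ldots,N+p$ in the BCF of $x$ and $y$, respectively, and set $\psi:=\psi_{b_1(x)\cdots b_N(x)}$. Then $\psi^{-1}(x)\in I(\omega_x)$ and $\psi^{-1}(y)\in I(\omega_y)$, so property~(A) and the definition \eqref{K-def} of $K_2$ yield $|\psi^{-1}(x)-\psi^{-1}(y)|\geq K_2$.

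Finally I would close the argument by combining the mean value theorem with the bounded distortion~(A2). Pick $\theta_1,\theta_2\in[0,1]$ with $|I(b_1(x),\ldots,b_N(x))|=|\psi'(\theta_1)|$ and $|x-y|=|\psi'(\theta_2)||\psi^{-1}(x)-\psi^{-1}(y)|$. Since $N\geq p$, the letter $b_N(x)$ is either the terminal letter of a $W^{(p)}$-word, hence non-parabolic by~(B), or else equals $t\neq2$, which is again non-parabolic; in either case (A2) applies at word length $N$ to give $|\psi'(\theta_1)|\leq C|\psi'(\theta_2)|$, and combining the three estimates produces the desired bound $|I(b_1(x),\ldots,b_N(x))|\leq CK_2^{-1}|x-y|$. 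The step I expect to be most delicate is the first one: verifying that the rigid structure imposed on $G^{(p)}(\alpha)$ really does force the first disagreement to land inside a $W^{(p)}$-word, so that the maximal choice of $(k^*,m^*)$ pins $N$ to the start of that word, where property (A) becomes available.
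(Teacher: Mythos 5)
Your proof is correct and follows essentially the same route as the paper's: locate the first disagreement of the BCF digits inside a $W^{(p)}$-block (using the rigidity of the inserted $2$-blocks and the letter $t$), take the last element of $\bigcup_k B_k$ not exceeding $s(x,y)$ as the truncation point, separate $\psi^{-1}(x)$ and $\psi^{-1}(y)$ by $K_2$ via property (A), and conclude with the mean value theorem plus the bounded distortion (A2), which applies because $b_{n(k)+mp}$ is non-parabolic. The only cosmetic difference is that the paper states the final step as a ratio of lengths rather than two separate mean-value evaluations; the content is identical.
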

\begin{proof}
 There exists $k\geq 0$
such that 
$n(k)\leq s(x,y)\leq n(k+1)-1.$
By the definition of $s(x,y)$ and $b_{n(k+1)}(x)=b_{n(k+1)}(y)=t$ by (III),
the second inequality is actually strict, namely
\[n(k)\leq s(x,y)<n(k+1)-1.\]
From the definition \eqref{n-ineq}, there exists $m\in\{0,1,\ldots,m(k)-1\}$ such that
\begin{equation}\label{ineq}n(k)+mp\leq s(x,y)< n(k)+(m+1)p.\end{equation}
Hence $n(k)+mp\in B_k$ holds.
Since $s(x,y)\geq p$ and $n(0)=0$, $k=0$ implies $m\geq1$.

By \eqref{ineq}, we have
 $b_i(x)=b_i(y)$ for  $1\leq i\leq n(k)+mp$ and 
 $b_i(x)\neq b_i(y)$ for some $n(k)+mp+1\leq i\leq n(k)+(m+1)p$. 
  Set $b_i=b_i(x)$ for $1\leq i\leq n(k)+mp$, 
    $\psi=\psi_{b_1\cdots b_{n(k)+mp+1}}$
    and
   \[\omega(z)=b_{n(k)+mp+1}(z)\cdots b_{n(k)+(m+1)p}(z) \] for  $z=x,y$.
We have
$\psi^{-1}(x)\in I(\omega(x))$,
$\psi^{-1}(y)\in I(\omega(y))$ and 
$\omega(x)\neq  \omega(y)$.
By (I) we have $\omega(x)\in W^{(p)}$ and $\omega(y)\in W^{(p)}$.
By (A2) and the definition of $K_2$ in \eqref{K-def}, we have
\[\begin{split}&\frac{|I(b_1,\ldots,b_{n(k)+mp})|}{|x-y|}\leq C\frac{|\psi^{-1}(I(b_1,\ldots,b_{n(k)+mp} ))|}{|\psi^{-1}(x)-\psi^{-1}(y)|}\leq CK_2^{-1},\end{split}\]
as required.
\end{proof}


\begin{proof}[Proof of Proposition~\ref{lip-prop}]We set
\[K_3=\min\{|x-y|\colon x,y\in G^{(p)}(\alpha),\ x\neq y,\ s(x,y)\leq n(\underbar{\it k})-1 \},\]
where $\underbar{\it k}$ is the positive integer in Lemma~\ref{compare}.
Since $K_2>0$ we have $K_3>0$.
Let $(x,y)$ be a pair of distinct points in
$G^{(p)}(\alpha)$.
If $s(x,y)\leq n(\underbar{\it k})-1$ then 
 $|x-y|\geq K_3,$ and so 
\begin{equation}\label{holder-ineq1}|f_\varepsilon(x)-f_\varepsilon(y)|\leq 1\leq K_3^{-1}|x-y|.\end{equation}
If $s(x,y)\geq n(\underbar{\it k})\geq p$, then
let $k\geq 0$, $m\geq0$ be the integers for which the conclusion of Lemma~\ref{separate} holds. Clearly we have $k\geq\underbar{\it k}$. Since $x,y\in I(b_1(x),\ldots,b_{n(k)+mp}(x))$ we have
$f_\varepsilon(x),f_\varepsilon(y)\in\overline{I}(b_1(x),\ldots,b_{n(k)+mp}(x))$, and thus
\begin{equation}\begin{split}\label{holder-ineq2}|f_\varepsilon(x)-f_\varepsilon(y)|&\leq|
\overline{I}(b_1(x),\ldots,b_{n(k)+mp}(x))|\\
&\leq  |I(b_1(x),\ldots,b_{n(k)+mp}(x))|^{\frac{1}{1+\varepsilon}}\leq \left(CK_2^{-1} |x-y|\right)^{\frac{1}{1+\varepsilon}}.\end{split}\end{equation} 
To deduce the second inequality we have used
Lemma~\ref{compare}.
From \eqref{holder-ineq1} and \eqref{holder-ineq2},
the desired inequality in Proposition~\ref{lip-prop} holds.
\end{proof}

\subsection{Proof of Theorem~\ref{thmb}}\label{pfthmb}
Let $\mathcal B$ be a finite subset of $\mathbb N_{\geq2}$
with $\#\mathcal B\geq2$ and $2\in \mathcal B$.
By \cite[Proposition~1.3]{JT} (see also \cite{Jar28} and  \cite[Proposition~2,\ Remark~1]{DajKra00}), for any $\varepsilon>0$ there exists an integer $M\geq3$ such that $\dim_{\rm H}F_{\{2,\ldots,M\}}>1-\varepsilon$. 
Combining this with Theorem~\ref{thma} yields
\[\begin{split}\lim_{\alpha\to2+0}&\dim_{\rm H}(G(\alpha)\cap\{x\in\mathbb R\colon (b_n(x))_{n=1}^\infty\ \text{\rm is bounded}\})\\
&\geq\lim_{\alpha\to2+0}\dim_{\rm H}(G(\alpha)\cap F_{\{2,\ldots,M\}}  )=\dim_{\rm H}F_{\{2,\ldots,M\}}>1-\varepsilon.\end{split}\] Decreasing $\varepsilon$ to $0$ we obtain the desired equality.\qed

\section{On irrationality exponents for the BCF}
The irrationality exponent of each irrational $x$ is given by \begin{equation}\label{formula}\mu(x)=2+ \limsup_{n\to\infty}\frac{\log a_{n+1}(x)}{\log q_n(x)},\end{equation} see \cite[Remark~2]{DuvShi20} and \cite[Theorem~1]{Son04} for example.
Duverney and Shiokawa \cite{DuvShi24} asked whether 
there is an analogous formula for $\mu(x)$ in terms of the BCF. Define two sequences $(r_n(x))_{n=-1}^\infty$, $(s_n(x))_{n=-1}^\infty$ of integers inductively by
\begin{equation}\label{pq-BCF}\begin{split}&r_{-1}(x)=1,\ r_0(x)=b_0(x),\ r_n(x)=b_n(x)r_{n-1}(x)-r_{n-2}(x),\\
&s_{-1}(x)=0,\ s_0(x)=1,\ s_n(x)=b_n(x)s_{n-1}(x)-s_{n-2}(x).\end{split}\end{equation}
Then $|r_n(x)|$, $s_n(x)$ are coprime for all $n\geq1$ and we have
\[\lim_{n\to\infty}\frac{r_n(x)}{s_n(x)}=x.\]
Duverney and Shiokawa \cite[Theorem~1.1]{DuvShi24} proved 
that if the lengths of blocks of $2$ in  $(b_n(x))_{n=1}^\infty$ is bounded, then analogously to \eqref{formula} we have
\begin{equation}\label{formula1}\mu(x)=2+\limsup_{n\to\infty}\frac{\log b_{n+1}(x)}{\log s_n(x)}.\end{equation}

Using \eqref{pq-BCF} 
 it is easy to see that $s_n(x)\to\infty$ for any irrational $x$. Hence, the equality \eqref{formula1} breaks down if $\mu(x)>2$ and $(b_n(x))_{n=1}^\infty$ is bounded.
In \cite[Section~3]{DuvShi24}, Duverney and Shiokawa constructed irrationals with this property, but did not consider the Hausdorff dimension of sets of such irrationals. From Corollary~\ref{corb} it follows that
\eqref{formula1} breaks down in a set of Hausdorff dimension $1$.
\begin{thm}We have
\[\dim_{\rm H}\left\{x\in\mathbb R\colon\mu(x)>2+\limsup_{n\to\infty}\frac{\log b_{n+1}(x)}{\log s_n(x)}\right\}=1.\]
\end{thm}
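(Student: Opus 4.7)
The plan is to derive the theorem as an essentially immediate consequence of Corollary~\ref{corb}. Write $S$ for the set appearing in the theorem, and set
\[T=\{x\in\mathbb R\colon \mu(x)>2\ \text{and}\ (b_n(x))_{n=1}^\infty\ \text{is bounded}\}.\]
I would prove the inclusion $T\subset S$ and then invoke $\dim_{\rm H}T=1$ from Corollary~\ref{corb}; the opposite bound $\dim_{\rm H}S\le 1$ is trivial since $S\subset\mathbb R$.

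For the inclusion, fix $x\in T$. Since $(b_n(x))_{n=1}^\infty$ is bounded, the numerator sequence $(\log b_{n+1}(x))_{n=1}^\infty$ is bounded. For the denominator, I would record the elementary fact that the sequence $(s_n(x))_{n=0}^\infty$ defined in \eqref{pq-BCF} is a strictly increasing sequence of positive integers, hence diverges to infinity. Indeed, $s_0(x)=1$ and $s_1(x)=b_1(x)\ge 2$, and if $s_n(x)>s_{n-1}(x)\ge 1$, then using $b_{n+1}(x)\ge 2$,
\[s_{n+1}(x)=b_{n+1}(x)s_n(x)-s_{n-1}(x)\ge 2s_n(x)-s_{n-1}(x)=s_n(x)+(s_n(x)-s_{n-1}(x))>s_n(x).\]
Consequently $\limsup_{n\to\infty}\log b_{n+1}(x)/\log s_n(x)=0$, so the inequality defining $S$ reduces to $\mu(x)>2$, which holds by the assumption $x\in T$. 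Hence $x\in S$.

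There is no genuine obstacle here: all the difficulty is already absorbed into Corollary~\ref{corb} (equivalently Theorem~\ref{thmb}), whose proof supplies an irrational $x$ with $\mu(x)>2$ and bounded BCF partial quotients in a set of full Hausdorff dimension in $\mathbb R$. The present theorem only repackages that result with the trivial observation that bounded BCF partial quotients force the $\limsup$ on the right-hand side to vanish, so that the BCF analogue \eqref{formula1} of the RCF formula \eqref{formula} for the irrationality exponent fails in a set of Hausdorff dimension $1$.
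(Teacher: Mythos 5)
Your proof is correct and follows exactly the paper's (largely implicit) argument: the paper likewise observes that $s_n(x)\to\infty$ forces the $\limsup$ to vanish when $(b_n(x))_{n=1}^\infty$ is bounded, and then deduces the theorem directly from Corollary~\ref{corb}. Your explicit verification that $(s_n(x))_{n=0}^\infty$ is strictly increasing is a welcome detail the paper leaves to the reader.
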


\subsection*{Acknowledgments} 
The author thanks Hiroaki Ito and Kota Saito  for fruitful discussions.
This research was supported by the JSPS KAKENHI 25K21999.

\if0
\begin{lem}
For any $x\in L(\Phi)$, and any $n\in\mathbb N$ such that $b_n(x)\neq2$, we have \[\log q_n^-(x)\leq -\log|\phi'_{b_1(x)\cdots b_n(x)}(0)|+\log2.\]\end{lem}

Hence
\[|J(b_1(x),\ldots,b_n(x))|\leq\frac{2}{q_n^-(x)}.\]
suppose $a_n\neq2$. Then $|J(a_1,\ldots,a_n)|\sim|\phi'_{b_1(x)\cdots b_n(x)}(0)|$.
$|DT^n|2/q_n(x)\geq1.$
\[\log q_n(x)\leq \log|DT^n(x)|+\log2\]
\fi


\begin{thebibliography}{10}

\bibitem{BRS}
V. Becher, J. Reimann and T.A. Slaman.    Irrationality exponent, Hausdorff dimension and effectivization. {\it Monatsh. Math.} {\bf 185} (2018), 167--188. 

\bibitem{BDV01}
V. Beresnevich, D. Dickinson and S. Velani. Sets of exact ‘logarithmic’ order in the
theory of diophantine approximation. {\it Math. Ann.} {\bf 321} (2001), 253–273.
  
  \bibitem{Bes34} A. S. Besicovitch. Sets of fractal dimensions (IV): on rational approximation to real numbers. {\it J. London Math. Soc.} {\bf 9} (1934), 126--131.
  
  
  \bibitem{Bug03} Y. Bugeaud.  Sets of exact approximation order by rational numbers. {\it Math. Ann.} {\bf 327} (2003), 171--190.

\bibitem{Bug08} Y. Bugeaud.
Diophantine approximation and Cantor sets. {\it Math. Ann.} {\bf 341} (2008), 677--684.
  

\bibitem{DajKra00} K. Dajani and C. Kraaikamp.
The mother of all continued fractions. {\it Colloquium Math.} {\bf 84/85} (2000), 109--123.
  
  \bibitem{DuvShi20} D. Duverney and I. Shiokawa. Irrationality exponents of numbers related with Cahen's constant. {\it Monatsh. Math.} {\bf 191} (2020), 53--76.

  \bibitem{DuvShi24} D. Duverney and I. Shiokawa. Irrationality exponents of semi-regular continued fractions. {\it Tokyo J. Math.} {\bf 47} (2024), 89--109.
  



 
  
  

  \bibitem{Fal94} K. Falconer.
  Sets with large intersection properties. {\it J. London Math. Soc.} {\bf 49} (1994), 267--280.

 \bibitem{Fal14}K. Falconer. Fractal geometry - Mathematical foundations and applications (Third edition). {\it Wiley} 2014.

  
 \bibitem{G41}  I. J. Good. The fractional dimensional theory of continued fractions,  {\it Math. Proc. Camb. Philos. Soc.} {\bf 37} (1941), 199--228. 

 \bibitem{Gut63} R. G\"uting. On Mahler's function $\theta_1$. {\it Michigan Math. J.} {\bf 10}  (1963), 161--179.

 \bibitem{HV98} R. Hill and S. L. Velani.
 The Jarn\'ik-Besicovitch theorem for geometrically finite Kleinian groups. 
 {\it Proc. London Math. Soc.} {\bf 77} (1998), 524--550. 
 



 \bibitem{JT} J. Jaerisch and H. Takahasi. 
Mixed multifractal spectra of Birkhoff averages for non-uniformly expanding one-dimensional Markov maps with countably many branches.
      {\it Adv. Math.} {\bf 385} (2021), 107778.

       \bibitem{Jar28} 
       V. Jarn\'ik.
 Zur metrischen theorie der diophantischen approximationen.
 {\it Pr\'ace Mat. Fiz.} {\bf 36}
 (1928), 91--106.
 
 \bibitem{Jar29} V. Jarn\'ik. Diophantische approximationen und Hausdorffsches mass. {\it Matematicheskii
 Sbornik}
  {\bf 36}  (1929), 371--382.





  
  \bibitem{Khi24} A. Khintchine. Einige
  S\"atze \"uber Kettenbr\"uche, mit Anwendungen auf die Theorie der diophantischen Approximationen. {\it Math. Ann.} {\bf 92} (1924), 115--125.

  \bibitem{Khi64} A. Y. Khinchin. Continued Fractions. {\it University of Chicago Press III. London} 1964.
  





\bibitem{Mah84}
K. Mahler. Some suggestions for further research. {\it Bull. Austral. Math. Soc.} {\bf 29}  (1984), 101--108.


\bibitem{MauUrb03} R. D. Mauldin and M. Urba\'nski.
Graph directed Markov systems: geometry and Dynamics of Limit Sets. Cambridge Tracts in Math.
 {\bf 148}, {\it Cambridge University Press, Cambridge} 2003.

 

\bibitem{R57} A. R\'enyi, On algorithms for the generation of real numbers. {\it Magyar Tud. Akad. Mat. Fiz. Oszt. K\"ozl.} {\bf 7} (1957), 265--293.

\bibitem{R55}
K. F. Roth. Rational approximations to algebraic numbers. {\it Mathematika} {\bf 2} (1955), 1--20.

\bibitem{Sch71} W. M. Schmidt.  Approximation
to algebraic numbers. {\it Enseignement Math.} {\bf 17} (1971), 187--253.



\bibitem{Son04} J. Sondow. Irrationality measures, irrationality bases, and a theorem of Jarn\'ik.  arXiv:math/0406300 

  

\bibitem{T25} H. Takahasi.
 The distribution of the largest digit for parabolic Iterated Function Systems of the interval, {\it J. Fractal Geometry}, to appear.

  
 
 \end{thebibliography}
\end{document}